\newtheorem{theorem}{Theorem}[section]
\newtheorem{proposition}[theorem]{Proposition}
\newtheorem{lemma}[theorem]{Lemma}
\newtheorem{corollary}[theorem]{Corollary}
\newtheorem{remark}[theorem]{Remark}
\newtheorem{conjecture}[theorem]{Conjecture}
\newtheorem{definition}[theorem]{Definition}
\newtheorem{question}[theorem]{Question}
\newcommand{\LL}{\mathcal {L}}
\newcommand{\J}{\mathscr {J}}
\newcommand{\F}{\mathbb {F}}
\newcommand{\im}{\mathrm {im}}
\newcommand{\B}{\mathcal {B}}
\newcommand{\C}{\mathcal {C}}
\newcommand{\Fqt}{\mathbb{F}_{q^2}}
\newcommand{\Fq}{\mathbb{F}_q}
\def\PP{{\mathbb P}}
\def\Fq{{\mathbb F}_q}
\def\FF{{\mathbb F}}
\def\PP{{\mathbb P}}
\newcommand{\V}{{\mathsf{V}}}
\newcommand{\X}{\mathcal{X}}
\newcommand{\T}{\mathcal{T}}
\title[A proof of S{\o}rensen's conjecture on Hermitian surfaces]{A proof of S{\o}rensen's conjecture on Hermitian surfaces}
\author{Peter Beelen}
\address{Department of Applied Mathematics and Computer Science, \newline \indent
Technical University of Denmark, DK 2800, Kgs. Lyngby, Denmark}
\email{pabe@dtu.dk}
\author{Mrinmoy Datta}
\address{Institute of Mathematics and Statistics, \newline \indent
University of Troms\o, 9037 Troms\o, Norway}
\email{mrinmoy.dat@gmail.com}
\author{Masaaki Homma}
\address{Department of Mathematics and Physics \newline \indent Kanagawa University, Hiratsuka 259-1293, Japan}
\email{homma@kanagawa-u.ac.jp}
\keywords{Hermitian surfaces, intersection of surfaces, rational points}
\subjclass[2010]{Primary: 14G15, 05B25}
\date{}
\begin{document}

\begin{abstract}
In this article we prove a conjecture formulated by A.B.~S\o rensen in 1991 on the maximal number of $\Fqt$-rational points on the intersection of a non-degenerate Hermitian surface and a surface of degree $d \le q.$
\end{abstract}

\maketitle

\section{Introduction}

Algebraic varieties $V$ defined over a finite field $\mathbb{F}_q$ with $q$ elements, occur in the interplay of various mathematical disciplines:
algebraic geometry, finite geometry, coding theory, to name a few.
To study the behaviour of algebraic functions of $V$, one often would like to count the number of $\F_q$-rational points where it vanishes.
In particular, for a given polynomial $F$, one would like to compute the cardinality of the set of $\Fqt$-rational points in the intersection
$V(F) \cap V,$ where $V(F)$ denotes the variety defined by the equation $F=0.$
Hermitian varieties defined over a finite field $\mathbb{F}_{q^2}$ are particularly well-studied, since they were introduced in 1966 by Bose and
Chakravarti, \cite{BC,C}.
In particular, the line-plane incidence with respect to the non-degenerate Hermitian surfaces is well understood, see \cite{CK, GK, IZ2} among others.
Also from the point of view of algebraic error-correcting codes Hermitian varieties have been studied, since they have a large number of
$\mathbb{F}_{q^2}$-rational points, see for example \cite[Examples 6.5, 6.6]{L}.
The application to coding theory leads to the following question arises, which was formulated by A.B. S\o rensen in \cite{SoT}:

\begin{question}\label{q} \normalfont
Let $F \in \Fqt[x_0, x_1, x_2, x_3]$ be a homogeneous polynomial of degree $d$ and $V_2$ denote a non-degenerate Hermitian surface in $\PP^3$ defined over $\Fqt$.
What is the maximum number of $\Fqt$-rational points in $V (F) \cap V_2$?
\end{question}

An answer to this question, would determine the minimum distance of the codes coming from Hermitian surfaces, but is also of
independent interest.
For example, an equivalent question is to ask for the maximum number of rational points a hyperplane section can have with the $d$-uple embedding
of the Hermitian surface.
In this context, Chakravarti \cite{Ch} studied the $2$-uple embedding of the cubic surface defined by the equation $x_0^3 + x_1^3 + x_2^3 + x_3^3 = 0$ in
$\PP^3(\mathbb{F}_4)$.
S{\o}rensen's conjecture from 1991 is the following:

\begin{conjecture}\cite[Page 9]{SoT}\label{conj:soerensen}
Let $F$ and $V_2$ be as in Question \ref{q}. If $d \le q$, then
$$|(V(F) \cap V_2)(\Fqt)| \le d (q^3 + q^2 - q) + q + 1.$$
Further, the surfaces given by a homogeneous polynomial  $F \in \Fqt[x_0, x_1, x_2, x_3]$ attaining the above upper bound are given by a union of
$d$ planes in $\PP^3 (\Fqt)$ that are tangent to $V_2$, each containing a common line $\ell$ intersecting $V_2$ at $q+1$ points.
\end{conjecture}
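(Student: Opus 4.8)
The plan is a Serre‑type argument adapted to the Hermitian surface: peel off the plane components of $V(F)$, bound their union directly using a single structural fact about tangent planes, and estimate the remaining part by a point count for curves lying on $V_2$.

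\textbf{Setup.} Since $V(F)(\Fqt)$ depends only on $\operatorname{rad}F$ and the asserted bound is non‑decreasing in $d$, I would first reduce to the case that $F$ is square‑free and prove the bound with $e:=\deg F$ in place of $d$; equality will then force $e=d$. After fixing coordinates so that $V_2$ is the standard non‑degenerate Hermitian surface, write $\X := V(F)\cap V_2$ and factor $F=L_1\cdots L_s\cdot G$, where $L_1,\dots,L_s$ cut out the distinct planes $\pi_1,\dots,\pi_s$ among the components of $V(F)$ and $G$, of degree $g:=e-s$, has no linear factor (so $V(G)$ contains no plane). Throughout I would use the standard facts about $V_2$: it is smooth of degree $q+1$; a line meets it in $1$, $q+1$ or $q^2+1$ points, the last exactly when the line lies on $V_2$ (a \emph{generator}); a plane meets it in a nondegenerate Hermitian curve ($q^3+1$ points) or, precisely when it is tangent, in a cone of $q+1$ generators through a point ($q^3+q^2+1$ points); and the tangent planes are exactly the polar planes $P^\perp$, $P\in V_2(\Fqt)$.

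\textbf{The plane components.} The single fact needed here is that if $\pi=P^\perp$ and $\pi'=(P')^\perp$ are distinct tangent planes then $\pi\cap\pi'=\langle P,P'\rangle^\perp$, and since the line $\langle P,P'\rangle$ already carries the two points $P,P'\in V_2$ it is a secant or a generator, so its polar line $\pi\cap\pi'$ meets $V_2$ in at least $q+1$ points. Granting this, set $A_i=(\pi_i\cap V_2)(\Fqt)$ and, if some $\pi_j$ is tangent, fix such a $j_0$; for $i\neq j_0$ one gets $|A_i\setminus A_{j_0}|=|A_i|-|A_i\cap A_{j_0}|\le q^3+q^2-q$ (when $\pi_i$ is tangent, $|A_i|=q^3+q^2+1$ and $|A_i\cap A_{j_0}|\ge q+1$; otherwise $|A_i|=q^3+1$ and $|A_i\cap A_{j_0}|\ge 1$, since every line meets $V_2$). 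Hence
$$\Big|\bigcup_{i=1}^s A_i\Big|\le (q^3+q^2+1)+(s-1)(q^3+q^2-q)=s(q^3+q^2-q)+q+1,$$
and the case with no tangent $\pi_i$ is even easier ($\le s(q^3+1)$, strictly smaller for $q\ge 2$). Chasing the equalities — all $\pi_i$ tangent, each $\pi_i\cap\pi_{j_0}$ a secant line, the sets $A_i\setminus A_{j_0}$ pairwise disjoint — I would deduce that $\big|\bigcup_{i=1}^s A_i\big|$ equals $s(q^3+q^2-q)+q+1$ only when $\pi_1,\dots,\pi_s$ are $s$ planes tangent to $V_2$ through a common line $\ell$ with $|\ell\cap V_2|=q+1$.

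\textbf{The residual surface and conclusion.} Since $|\X(\Fqt)|\le\big|\bigcup_{i=1}^s A_i\big|+M$ with $M:=|(V(G)\cap V_2)(\Fqt)|$, it remains to show $M\le g(q^3+q^2-q)$ when $s\ge 1$ (note then $e=s+g\ge 3$, since $g=1$ cannot occur, hence $q\ge 3$) and $M\le g(q^3+q^2-q)+q+1$ when $s=0$, with strict inequality whenever $g\ge 1$. As $V_2$ is irreducible of degree $q+1>g$, the curve $V(G)\cap V_2$ has degree $g(q+1)$ and its only possible line components are generators of $V_2$. I would split off those generators from a residual curve $\mathcal{Y}$ without line components; bound the number and the combined point count of the generators via Bézout on a generic nontangent plane (where $V_2$ cuts out an irreducible curve of degree $q+1>g$) together with the incidence geometry of generators (e.g.\ a degree‑$g$ cone through a point of $V_2$ carries at most $g$ generators, and grid/regulus configurations of generators are strictly sub‑extremal once $q\ge 3$); and bound $|\mathcal{Y}(\Fqt)|$ component by component by Weil/Homma--Kim estimates for curves in $\PP^3$ without line components. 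Combining with the previous paragraph yields $|\X(\Fqt)|\le e(q^3+q^2-q)+q+1\le d(q^3+q^2-q)+q+1$. For the extremal case, equality forces $e=d$, $F$ square‑free, and $g=0$ (because $M$'s bound is strict for $g\ge 1$), hence $s=d$ and $\big|\bigcup_{i=1}^d A_i\big|=d(q^3+q^2-q)+q+1$; the equality analysis above then forces $V(F)$ to be a union of $d$ planes tangent to $V_2$ through a common line meeting $V_2$ in $q+1$ points, precisely as stated.

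\textbf{Main obstacle.} The structural lemma and the counting for the plane components are short; the real difficulty lies in the bound on $M$. One must rule out, or precisely account for, point‑rich degenerate curves on $V_2$ — above all unions of generators in grid or regulus position — which calls for both the fine incidence structure of the generators of the Hermitian surface and sharp Weil/Homma--Kim point bounds; and since the estimates only leave the required slack because $g\le q$, the smallest values of $q$ will likely need to be handled separately.
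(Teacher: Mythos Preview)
Your overall decomposition mirrors the paper's: deal with the plane components directly and reduce to a surface containing no (tangent) plane. The plane-component estimate you give is correct and is essentially the same computation the paper performs in its final induction step. The real content, as you yourself flag, is the bound on $M=|(V(G)\cap V_2)(\Fqt)|$ when $V(G)$ contains no plane, and here your proposal has a genuine gap.

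The difficulty is that nothing prevents $V(G)\cap V_2$ from consisting \emph{entirely} of generators of $V_2$, up to $g(q+1)$ of them; in that case your residual curve $\mathcal Y$ is empty and Weil/Homma--Kim contributes nothing. Your B\'ezout-on-a-nontangent-plane argument only shows that there are at most $g(q+1)$ such generators (each meets the plane section, a Hermitian curve of degree $q+1$, in one point), which is no new constraint: that is already the degree of $V(G)\cap V_2$. A union of $g(q+1)$ generators could a priori carry close to $g(q+1)(q^2+1)=g(q^3+q^2+q+1)$ rational points, well above the target, and ruling this out is exactly the problem. The remarks about ``grid/regulus configurations'' and ``incidence geometry of generators'' gesture in the right direction but are not a proof; one needs a quantitative statement about how many incidences a system of generators contained in $V(G)$ must have, and nothing of the sort follows from the tools you list.

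The paper's argument for this step is quite different from your sketch and does not use Weil-type curve bounds at all. It first shows, by an intersection-multiplicity computation at a rational point, that any non-line component of $\X$ carrying an $\Fqt$-point must have degree at least $q+1$; this already disposes of the case $\mathcal Y(\Fqt)\neq\emptyset$. For the pure-generator case it introduces $X:=\min_{\ell\in\J_G}|\{m\in\J_G:m\neq\ell,\ m\cap\ell\neq\emptyset\}|$ and proves two complementary bounds: one \emph{increasing} in $X$ (by counting in the pencil of tangent planes through a minimizing line $\ell$, using B\'ezout in each plane), and one \emph{decreasing} in $X$ (by an inclusion--exclusion over the multiplicities $r_P$ of points on $\bigcup_{\ell\in\J_G}\ell$). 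Their crossover at $X=q+g-1$ yields $M\le gq^3+(g-1)q^2+1$, which is strictly below $g(q^3+q^2-q)+q+1$ for all $g\le q$. Without a replacement for this double-bound mechanism, your proposal does not close.
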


Since $V_2(\Fqt)=(q^3+1)(q^2+1)$, see \cite{BC}, it is obvious that $|(V(F) \cap V_2)(\Fqt)| \le (q^3+1)(q^2+1).$
Moreover, if $d \ge q+1$, then one can find a homogeneous polynomial $F \in \Fqt[x_0, x_1, x_2, x_3]$ of degree $d$ such that
$|V(F)(\Fqt) \cap V_2| = (q^3+1)(q^2+1).$
Indeed, one can simply choose $F$ to be a suitable multiple of the defining equation of $V_2$.
This makes the degree restriction $d \le q$ in Conjecture \ref{conj:soerensen} quite natural.
On the other hand, one could modify Question \ref{q} by asking for the maximum number of $\Fqt$-rational points in $V (F) \cap V_2$
under the condition that $V_2$ is not contained in $V(F)$. We will make this assumption in the remainder of the paper.

Currently, Conjecture \ref{conj:soerensen} is only known to hold for $d \in \{1,2,3\}.$
For $d=1$ it follows from results stated in \cite{BC}, see Theorem \ref{linear} below for more details.
For $d=2$, the conjecture was proven in 2007 \cite{E}, while recently the conjecture was proven to be true for $d=3$ in \cite{BD}.
The techniques used to prove the conjecture in \cite{E} and \cite{BD} were of a geometrical nature, and do not generalize to higher degree in an obvious way.
In this article, using a mix of geometric and combinatorial arguments, we prove Conjecture \ref{conj:soerensen}.
A byproduct of our methods results in an answer to the modified question for $d=q+1$.
More precisely, we will show that Conjecture \ref{conj:soerensen} also holds for $d=q+1$, provided the maximum is taken over all homogeneous
polynomials that are not a multiple of the defining equation of $V_2.$

\section{Preliminaries}\label{prel}

In the remainder of this paper, $q$ will denote a fixed prime power.
As usual, $\Fq$ and $\Fqt$ denote the finite fields with $q$ and $q^2$ elements respectively.
For $m \ge 0$, we denote by $\PP^m$, the projective space of dimension $m$ over the algebraic closure $\overline{\FF}_q$,
while $\PP^m (\Fqt)$ will denote the set of all $\Fqt$-rational points on $\PP^m$.
Further, for a homogeneous polynomial $F \in \Fqt[x_0, \dots , x_m]$, we denote by $V(F)$, the set of zeroes of $F$ in $\PP^m$ and by
$V(F)(\Fqt)$ the set of all $\Fqt$-rational points of $V(F)$.
By an algebraic variety we will mean a set of zeroes of a certain family of polynomials in projective space.
In particular, an algebraic variety need not be irreducible.
We remark that, whenever we say that a variety is irreducible or nonsingular, we will mean that the variety is irreducible or nonsingular over
$\overline{\FF}_q$.
In this section, we recall the definition and various important properties of the Hermitian varieties.
We will indicate precise references in the text, but generally speaking, the results in this section come from \cite{BC} and \cite{C}.

\begin{definition}
For an $(m+1) \times (m+1)$ matrix $A = (a_{ij})$, $0 \le i, j \le m$, with entries in $\Fqt$, we denote by $A^{(q)}$, the matrix whose
$(i, j)$-th entry is given by ${a_{ij}}^q$.
The matrix $A$ is said to be a \textit{Hermitian matrix} if $A \neq 0$ and $A^T = A^{(q)}$.

A \textit{Hermitian variety} of dimension $m-1$, denoted by $V_{m-1}$, is the set of zeroes of the polynomial $x^T A x^{(q)}$ inside $\PP^m$,
where $A$ is an $(m+1) \times (m+1)$ Hermitian matrix and $x = (x_0, \dots, x_m)^T$.
The Hermitian variety is said to be \textit{non-degenerate} if $\mathrm{rank} \ A = m$ and \textit{degenerate} otherwise.
\end{definition}

It was established in \cite[Equation (5.6)]{BC} that, by a suitable linear change of coordinate systems, one may represent a Hermitian variety of
rank $r$ and dimension $m-1$ as the set of solutions of the equation
\begin{equation}\label{herm}
x_0^{q+1} + x_1^{q+1} + \dots + x_{r-1}^{q+1} = 0.
\end{equation}
in $\PP^m$.
It thus follows easily that a Hermitian variety of rank $r$ is irreducible over the algebraic closure of $\Fq$ whenever $r \ge 3$.
Throughout this article we will restrict our attention to Hermitian curves and Hermitian surfaces, i.e. Hermitian varieties of dimensions $1$ and
$2$ respectively.
We begin by recalling the following result concerning the intersection of lines with Hermitian surfaces.

\begin{lemma}\cite[Section 7]{BC}\label{line}
Any line in $\PP^3$ defined over $\Fqt$ satisfies precisely one of the following.
\begin{enumerate}
\item[(i)]  The line intersects $V_2$ at exactly $1$ point. These lines are called \emph{tangent lines.}
\item[(ii)] The line intersects $V_2$ at exactly $q+1$ points. These lines are called \emph{secant lines:}
\item[(iii)] The line is contained in $V_2$. These lines are called \emph{generators}.
\end{enumerate}
\end{lemma}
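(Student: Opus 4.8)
The plan is to restrict the defining Hermitian form of $V_2$ to the line and to recognise the resulting binary form as the defining equation of a Hermitian variety of dimension at most $0$ inside $\PP^1$; the cases (i)--(iii) then correspond to that form having rank $1$, $2$ or $0$.

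Write $V_2 = V(H)$ with $H(x) = x^T A x^{(q)}$ for a $4 \times 4$ Hermitian matrix $A$, and let $B(x,y) = x^T A y^{(q)}$ be the associated sesquilinear form. Choose two distinct $\Fqt$-rational points $P$ and $Q$ spanning the given line $\ell$, so $\ell = \{[sP + tQ] : [s:t] \in \PP^1\}$ and $[s:t] \mapsto [sP+tQ]$ is a bijection from $\PP^1(\Fqt)$ onto the $\Fqt$-rational points of $\ell$. Expanding gives
\[
H(sP+tQ) = a\, s^{q+1} + c\, s t^q + c^q\, s^q t + b\, t^{q+1},
\]
with $a = H(P)$, $b = H(Q)$ and $c = B(P,Q)$. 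A short computation with $A^T = A^{(q)}$, using that $P,Q$ have coordinates in $\Fqt$, shows $a, b \in \Fq$ and $B(Q,P) = c^q$; hence the coefficient matrix $\begin{pmatrix} a & c \\ c^q & b \end{pmatrix}$ is either zero or a genuine $2\times 2$ Hermitian matrix, and the displayed polynomial is precisely the defining form (in the coordinates $[s:t]$) of the associated Hermitian variety of dimension at most $0$ in $\PP^1$.

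If this matrix vanishes, $H$ is identically zero on $\ell$, so $\ell \subseteq V_2$; then $\ell$ is a generator and meets $V_2$ in all $q^2+1$ points of $\PP^1(\Fqt)$, which is case (iii). Otherwise the matrix has rank $1$ or $2$, and by the normal form \eqref{herm} (with $m=1$) an $\Fqt$-linear change of coordinates on $\PP^1$ takes the restricted form to $u^{q+1}$ when the rank is $1$, and to $u^{q+1}+v^{q+1}$ when the rank is $2$. Such a change of coordinates is a bijection on $\PP^1(\Fqt)$, so it remains to count the $\Fqt$-rational zeros of these normal forms: $u^{q+1}$ vanishes only at $[0:1]$, giving exactly one point (case (i)), while $u^{q+1}+v^{q+1}$ forces $v \ne 0$ and then $u^{q+1} = -1$, which has exactly $q+1$ solutions because the map $u \mapsto u^{q+1}$ from $\Fqt^{*}$ to $\Fq^{*}$ is surjective with all fibres of size $q+1$ (case (ii)). Since ranks $0$, $1$, $2$ are exhaustive and mutually exclusive, the lemma follows.

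I expect no serious obstacle here: the only step beyond routine bookkeeping is checking that the restriction of $H$ to $\ell$ is again a Hermitian form, i.e. that its diagonal coefficients lie in $\Fq$ and its off-diagonal coefficients are $q$-th power conjugates; this is a direct consequence of $A^T = A^{(q)}$ together with $P^{(q^2)} = P$ for $P$ with coordinates in $\Fqt$, and everything else is the normal form \eqref{herm} plus the standard count for the norm map $\Fqt^{*} \to \Fq^{*}$.
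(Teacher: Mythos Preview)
Your argument is correct. The paper does not give its own proof of this lemma: it is stated with the attribution \cite[Section 7]{BC} and used as a standing fact, so there is no proof in the paper to compare against. What you have written is essentially the argument of Bose--Chakravarti: restrict the Hermitian form to the line, observe that the restriction is again Hermitian (this is exactly the computation $a,b\in\Fq$, $B(Q,P)=c^q$ you indicate), invoke the normal form \eqref{herm} in dimension~$1$, and count the $\Fqt$-rational zeros of $u^{q+1}$ and $u^{q+1}+v^{q+1}$. One cosmetic point: in the rank-$0$ case you mention $q^2+1$ points, which is of course just $|\ell(\Fqt)|$; you might simply say $\ell\subseteq V_2$ and leave it at that, since the count is not part of the trichotomy.
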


We denote by $\J$ the set of all generators of $V_2$. We recall the following equalities from \cite[Section 10]{BC}:
\begin{equation}\label{number}
|V_2 (\Fqt)| = (q^3 +1) (q^2 + 1) \ \ \ \ \text{and} \ \ \ \ |\J| = (q^3 + 1)(q + 1).
\end{equation}

Next, we recall the number of points in planar sections of the Hermitian surfaces:

\begin{theorem}\cite[Section 10]{BC}\label{linear}
Let $V_2$ denote a non-degenerate Hermitian surface in $\PP^3$. Let $\Pi$ be any hyperplane in $\PP^3$ defined over $\Fqt$.
If $\Pi$ is a tangent to $V_2$ at some point $P \in V_2$, then $\Pi$ intersects $V_2$ at exactly $q+1$ generators, all passing through $P$.
Otherwise, $\Pi$ intersects $V_2$ at a non-degenerate Hermitian curve $V_1$. In particular,
\begin{equation*}
|(V_2\cap \Pi)(\Fqt)| =
 \begin{cases}
q^3 + q^2 + 1 \ \ \ \ \mathrm{if} \ \Pi \ \mathrm{is \ a \ tangent \ plane,} \\
q^3 + 1 \ \ \ \ \ \ \ \ \ \ \mathrm{if} \ \Pi \ \mathrm{is \ not \ a \ tangent \  plane}.
\end{cases}
\end{equation*}
\end{theorem}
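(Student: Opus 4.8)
The plan is to understand $V_2\cap\Pi$ by restricting the defining Hermitian form of $V_2$ to $\Pi$ and tracking the rank of the restricted form, using the Hermitian polarity attached to $V_2$. Write $V_2=V(F)$ with $F(x)=x^TAx^{(q)}$ for a non-degenerate Hermitian matrix $A$, and set $h(u,v)=u^TAv^{(q)}$, so that $F(x)=h(x,x)$. Since the characteristic of $\Fqt$ divides $q$, a short computation (the derivatives of the $q$-th power terms vanish) gives $\nabla F(p)=Ap^{(q)}$; hence for $P=[p]\in V_2$ the tangent plane to $V_2$ at $P$ is the hyperplane with coefficient vector $Ap^{(q)}$, which we call the \emph{polar plane} $P^{\perp}=\{[x]:h(x,p)=0\}$. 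Because $A$ is invertible and $p\mapsto p^{(q)}$ is a bijection of $\PP^3$, the assignment $[p]\mapsto P^{\perp}$ is a bijection between points and planes, so every plane $\Pi$ defined over $\Fqt$ has a unique pole $P$ (also $\Fqt$-rational) with $P^{\perp}=\Pi$. Moreover $P\in V_2\iff h(p,p)=0\iff P\in P^{\perp}$, so by injectivity of the polarity $\Pi$ is tangent to $V_2$, necessarily at $P$, if and only if $P\in\Pi$.

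Next I would show that this dichotomy matches the rank of $F|_{\Pi}$, regarded as a Hermitian form on the $3$-dimensional space $\widehat{\Pi}$ underlying $\Pi$. Its radical is $\widehat{\Pi}\cap\widehat{\Pi}^{\perp}$, where $\perp$ now denotes $A$-orthogonality in $\overline{\FF}_q^{\,4}$; since $A$ is non-degenerate and $\dim\widehat{\Pi}=3$, the subspace $\widehat{\Pi}^{\perp}$ is exactly the line spanned by a lift of the pole $P$. Consequently $F|_{\Pi}$ has rank $2$ when $P\in\Pi$, with radical $\langle p\rangle$, and rank $3$ when $P\notin\Pi$; and $F|_{\Pi}\not\equiv 0$ in either case, since a non-degenerate Hermitian surface contains no plane. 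By \eqref{herm}, $V_2\cap\Pi$ is therefore a non-degenerate Hermitian curve $V_1$ in $\Pi\cong\PP^2$ when $\Pi$ is not tangent, and a degenerate Hermitian curve of rank $2$ when $\Pi$ is tangent.

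It then remains to read off the two descriptions. If $\Pi$ is not tangent, $V_1$ is a non-degenerate Hermitian curve, which has exactly $q^3+1$ rational points — either by the curve analogue of the first equality in \eqref{number} (also due to Bose--Chakravarti), or by a direct count on $x_0^{q+1}+x_1^{q+1}+x_2^{q+1}=0$. If $\Pi$ is tangent at $P$, then by \eqref{herm} the curve $V_2\cap\Pi$ is projectively equivalent to $x_0^{q+1}+x_1^{q+1}=0$ in $\PP^2$; since $t^{q+1}+1$ is separable (its derivative $(q+1)t^q$ is nonzero away from $0$) and splits over $\Fqt$ into the $q+1$ linear factors indexed by the elements of norm $-1$, this is a union of $q+1$ distinct lines through the single point corresponding to the radical $\langle p\rangle$, i.e.\ through $P$. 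Each of these lines lies on $V_2$ and hence is a generator by Lemma \ref{line}, so $\Pi\cap V_2$ consists of $q+1$ generators through $P$, accounting for $(q+1)(q^2+1)-q=q^3+q^2+1$ rational points. This yields both cases of the stated formula.

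I expect the main obstacle to be the bookkeeping forced by the semilinearity of $h$: pinning down which slot of $h$ must be fixed so that $P^{\perp}$ is a genuine linear variety, checking carefully that the tangent plane at a smooth point of $V(F)$ really is $P^{\perp}$ in positive characteristic, and — the crucial point — verifying that the radical of $F|_{\Pi}$ has dimension exactly $1$, never more, in the tangent case (this is where one uses that $\Pi$ is defined over $\Fqt$). Once these are settled, the point counts follow immediately from \eqref{herm}, the classical count for Hermitian curves, and Lemma \ref{line}.
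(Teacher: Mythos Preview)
Your argument is correct and is essentially the classical proof due to Bose--Chakravarti: restrict the Hermitian form to the hyperplane, determine the rank of the restriction via the polarity, and read off the two canonical forms from equation~\eqref{herm}. The paper, however, does not give its own proof of this statement; it is quoted as a known result from \cite[Section 10]{BC}, so there is no proof in the paper to compare against. Your reconstruction matches the approach one finds in that reference.

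One small remark on your bookkeeping concern: the point you flag as ``crucial'' --- that the radical of $F|_{\Pi}$ has dimension exactly~$1$ in the tangent case --- is immediate from your own setup and does not require $\Pi$ to be defined over $\Fqt$. Since $A$ is non-degenerate, $\widehat{\Pi}^{\perp}$ is always a line, so $\widehat{\Pi}\cap\widehat{\Pi}^{\perp}$ is either $0$ or that line; there is no way to get dimension~$2$ or more. Rationality of $\Pi$ is used elsewhere (to ensure the pole is $\Fqt$-rational and that the $q+1$ linear factors of $x_0^{q+1}+x_1^{q+1}$ are defined over $\Fqt$), but not for bounding the radical.
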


\begin{remark}\label{linetp} \normalfont
Let $\Pi$ be the tangent plane to $V_2$ at a point $P$. It follows that $\Pi$ contains $q+1$ generators and $q^2 - q$ tangent lines passing through $P$. All other lines defined over $\Fqt$ that are contained in $\Pi$ are secant lines. We refer to \cite[Section 10]{BC} for the proof of these results.
\end{remark}

Let $\ell$ be any line in $\PP^3$ defined over $\Fqt$. As introduced in \cite{BD}, by \textit{the book of planes around} $\ell$,
denoted by $\mathcal{B}(\ell)$, we mean the set of all planes in $\PP^3$ defined over $\Fqt$ that contain $\ell$.
We note that, for any line $\ell$ in $\PP^3$ defined over $\Fqt$, the corresponding book has cardinality $q^2 + 1$.

\begin{proposition}\label{descbook}
Let $\ell$ be a line in $\PP^3$ defined over $\Fqt$ and $\mathcal{B}(\ell)$ be the book of planes around $\ell$.
\begin{enumerate}
\item[(a)] \cite[Lemma 5.2.3]{C} If $\ell$ is a generator, then every plane in $\mathcal{B}(\ell)$ is tangent to $V_2$ at some point of $\ell$.
\item[(b)] \cite[Lemma 5.2.6]{C} If $\ell$ is a tangent line, then exactly one plane in $\mathcal{B}(\ell)$ is tangent to $V_2$ at the point where $\ell$ meets $V_2$.
\item[(c)] \cite[Lemma 5.2.5]{C} If $\ell$ is a secant line, intersecting $V_2$ at $q+1$ points $P_0, \dots, P_q$, then out of the $q^2 + 1$ planes in $\mathcal{B}(\ell)$, exactly $q+1$ are tangent to $V_2$ at some point distinct from $P_0, \dots, P_q$.
\end{enumerate}
\end{proposition}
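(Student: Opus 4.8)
The plan is to dispatch all three parts at once by a double count of the incidences between the $\Fqt$-points of $V_2$ and the planes of $\mathcal{B}(\ell)$, and then to pin down the points of tangency using Theorem~\ref{linear} and Remark~\ref{linetp}.

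\emph{The count.} Write $t$ for the number of planes of $\mathcal{B}(\ell)$ that are tangent to $V_2$, so that the remaining $q^2+1-t$ planes of the book are non-tangent. I would evaluate $S:=\sum_{\Pi\in\mathcal{B}(\ell)}|(\Pi\cap V_2)(\Fqt)|$ in two ways. Plane by plane, Theorem~\ref{linear} gives
\[
S=t(q^3+q^2+1)+(q^2+1-t)(q^3+1)=(q^2+1)(q^3+1)+tq^2.
\]
Point by point, a point of $V_2(\Fqt)$ lying on $\ell$ belongs to all $q^2+1$ planes of the book, while a point of $V_2(\Fqt)$ not on $\ell$ belongs to exactly one plane of the book, namely the plane spanned by $\ell$ and that point; hence
\[
S=(q^2+1)\,|(\ell\cap V_2)(\Fqt)|+\bigl(|V_2(\Fqt)|-|(\ell\cap V_2)(\Fqt)|\bigr)=|V_2(\Fqt)|+q^2\,|(\ell\cap V_2)(\Fqt)|.
\]
Since $|V_2(\Fqt)|=(q^3+1)(q^2+1)$, comparing the two expressions forces $t=|(\ell\cap V_2)(\Fqt)|$, and by Lemma~\ref{line} this equals $q^2+1$, $1$, or $q+1$ according as $\ell$ is a generator, a tangent line, or a secant line. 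This is exactly the numerical assertion of (a), (b), (c) in turn.

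\emph{Locating the tangency points.} First I would note that a tangent plane $\Pi$ is tangent to $V_2$ at a unique point: by Theorem~\ref{linear}, $\Pi\cap V_2$ is the union of $q+1$ distinct generators passing through its point of tangency, and for $q\ge1$ two distinct points cannot be common to $q+1$ distinct lines. For (a), since $t=q^2+1=|\mathcal{B}(\ell)|$ every plane $\Pi$ of the book is tangent, say at a point $P$; as the generator $\ell$ is contained in $\Pi$, its $q^2+1$ rational points lie in $(\Pi\cap V_2)(\Fqt)$, the union of the rational points of the $q+1$ generators through $P$, and since a line different from all of these would meet each of them in at most one rational point, $\ell$ must coincide with one of them, whence $P\in\ell$. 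For (b), the unique tangent plane $\Pi_0$ of the book contains the tangent line $\ell$, so by Remark~\ref{linetp} (a tangent line lying in a tangent plane passes through that plane's point of tangency) the point of tangency of $\Pi_0$ lies on $\ell$, and therefore equals $P_0$, the point where $\ell$ meets $V_2$. For (c), if one of the $q+1$ tangent planes $\Pi$ were tangent at one of the $P_i$, then $\ell\subseteq\Pi$ would be a secant line contained in the tangent plane at $P_i$ and passing through $P_i$; but Remark~\ref{linetp} says every secant line contained in that plane avoids $P_i$, a contradiction. Hence each of the $q+1$ tangent planes is tangent at a point distinct from $P_0,\dots,P_q$.

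I do not expect a genuine obstacle here: once Theorem~\ref{linear} is in hand the double count is forced, and the rest is bookkeeping with Theorem~\ref{linear} and Remark~\ref{linetp}. The one spot that needs a little care is the pigeonhole step in (a) together with the uniqueness of the point of tangency, where the argument should be carried out at the level of $\Fqt$-rational points, exploiting that a line carries $q^2+1$ rational points — far more than the $q+1$ it could inherit from distinct generators it does not coincide with.
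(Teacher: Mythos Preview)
Your proof is correct. The paper does not actually supply a proof of this proposition; it simply imports the three statements from Chakravarti~\cite{C}. So there is nothing to compare against on the paper's side.

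Your double-count $t=|(\ell\cap V_2)(\Fqt)|$ is clean and gives the three numerical values in one stroke, and your use of Theorem~\ref{linear} and Remark~\ref{linetp} to pin down the tangency points is sound in each case: in (a) the pigeonhole argument on $\Fqt$-points of $\ell$ versus the $q+1$ generators through the tangency point is fine since $q^2+1>q+1$; in (b) and (c) you correctly read off from Remark~\ref{linetp} that tangent lines in a tangent plane pass through the point of tangency while secant lines do not. The one cosmetic point: your remark that the tangency point of a tangent plane is unique can be shortened by observing that two distinct concurrent pencils of $q+1$ lines in a plane cannot coincide, but what you wrote is already enough.
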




We conclude the section by giving the following proposition which guarantees that the upper bound in Conjecture \ref{conj:soerensen} is attained by a union of $d$ hyperplanes.
This was first observed in \cite{SoT}, alternatively see \cite[Proposition 2.8]{BD}.

\begin{proposition}\label{attained}
Assume that $d \le q+1.$ Then there exist $d$ distinct planes $\Pi_1, \dots, \Pi_d$ be $d$ that are tangent to $V_2$ and contain a common secant line.
Moreover,
$$|(\Pi_1 \cup \dots \cup \Pi_d)(\Fqt) \cap V_2(\Fqt)| = d(q^3 + q^2 - q) + q + 1.$$
Consequently, there exists a homogeneous polynomial $F \in \Fqt[x_0, x_1, x_2, x_3]$ of degree $d$ such that
$V(F) = \Pi_1 \cup \dots \cup \Pi_d$.
\end{proposition}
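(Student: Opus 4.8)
The plan is to construct the configuration explicitly and then count. First I would pick a secant line $\ell$ to $V_2$, which exists because, by Lemma \ref{line}(ii), secant lines are among the line types occurring in $\PP^3$; concretely, in the normalized coordinates of \eqref{herm} one may take $\ell$ to be a coordinate line meeting $V_2$ in $q+1$ rational points. Next I would invoke Proposition \ref{descbook}(c): among the $q^2+1$ planes in the book $\mathcal{B}(\ell)$, exactly $q+1$ are tangent to $V_2$ at a point outside $\ell$. Since $d \le q+1$, I can choose $d$ distinct planes $\Pi_1,\dots,\Pi_d$ from among these $q+1$ tangent planes; this gives the desired $d$ distinct tangent planes through the common secant line $\ell$.

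The second step is the point count. Each $\Pi_i$ is a tangent plane, so by Theorem \ref{linear} we have $|(V_2 \cap \Pi_i)(\Fqt)| = q^3+q^2+1$. The planes pairwise intersect exactly in $\ell$ (they are distinct planes sharing $\ell$), and $\ell$ is a secant line so $|\ell \cap V_2(\Fqt)| = q+1$. Moreover the $q+1$ points of $V_2 \cap \ell$ all lie on each $\Pi_i$. Hence by inclusion–exclusion over the union,
\begin{equation*}
|(\Pi_1 \cup \dots \cup \Pi_d)(\Fqt) \cap V_2(\Fqt)| = \sum_{i=1}^{d} |(V_2 \cap \Pi_i)(\Fqt)| - (d-1)\,|\ell \cap V_2(\Fqt)|,
\end{equation*}
since every point of $V_2$ lying in more than one $\Pi_i$ lies in all of them and hence on $\ell$, and conversely. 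This evaluates to $d(q^3+q^2+1) - (d-1)(q+1) = d(q^3+q^2-q) + q+1$, as claimed.

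Finally, the existence of a homogeneous $F$ of degree $d$ with $V(F) = \Pi_1 \cup \dots \cup \Pi_d$ is immediate: each $\Pi_i$ is cut out by a linear form $L_i \in \Fqt[x_0,x_1,x_2,x_3]$, and one sets $F = L_1 L_2 \cdots L_d$, a homogeneous polynomial of degree $d$ whose zero set is exactly the union. I expect the only point requiring care to be the inclusion–exclusion bookkeeping: one must verify that the pairwise (and higher) intersections of the $\Pi_i$ with $V_2$ contribute exactly the $q+1$ points of $\ell \cap V_2$ and nothing more, which follows from the fact that distinct planes in a book meet precisely in the spine $\ell$. Everything else is a direct appeal to Theorem \ref{linear} and Proposition \ref{descbook}.
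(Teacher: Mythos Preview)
Your proposal is correct. The paper does not supply its own proof of this proposition but simply refers to \cite{SoT} and \cite[Proposition 2.8]{BD}; your argument fills in the details in exactly the natural way, using Proposition~\ref{descbook}(c) to find the $d\le q+1$ tangent planes through a secant line and Theorem~\ref{linear} together with inclusion--exclusion over the book (whose pages meet only in the spine $\ell$) to carry out the count.
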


Note that in light of part (c) of Proposition \ref{descbook}, the assumption that $d \le q+1$ is essential.

\section{Reduction to the union-of-lines case}\label{red}

In this section we begin our preparations to prove Conjecture \ref{conj:soerensen}.
In the following definition, we introduce some notation that will be used in the remainder of the article.

\begin{definition}\label{def:xdelta}
Let $F \in \F_{q^2}[x_0,x_1,x_2,x_3]$ be a homogeneous polynomial of degree $d$ and assume that $V(F)$ does not contain $V_2$ as irreducible component.
We define $$\X:= V(F) \cap V_2, \ \J_F := \{\ell \in \J \mid \ell \subset \X\}, \text{ and } \ \delta := d(q+1) - |\J_F|.$$
Further we write $\X = \LL \cup \X'$, where $\LL := \bigcup_{\ell \in \J_F} \ell$ and $\X'$ is a curve containing no lines defined over $\Fqt$.
\end{definition}

The condition that $V(F)$ does not contain $V_2$ is automatically true if $d \le q$, since $V_2$ has degree $q+1$.
If $V(F)$ does not contain $V_2$, then $\X$ is a complete intersection and $\deg \X = d(q+1)$, where $\deg \X$ denotes the (scheme theoretic)
degree of $\X$.
In particular we see that $\delta \ge 0$.
Note that $\deg \X' \le \deg \X -|\J_F| =\delta$.

The main goal of this section is to show that the inequality of the
Conjecture \ref{conj:soerensen} holds if $\X' (\Fqt) \neq \emptyset$.
We will in fact show that a stronger upper bound holds, even if no assumption on the degree $d$ is made.
We will assume throughout and in the remainder of the article as well,
that $V_2$ is not a component of $V(F),$ without stating this assumption explicitly in all theorems.

We begin with the following observation:

\begin{lemma}\label{lem:degbound}
Let $\C \subset V_2$ be a curve containing no lines defined over $\Fqt$. If $\deg \C < q+1$, then $\C$ contains no $\F_{q^2}$-rational points.
\end{lemma}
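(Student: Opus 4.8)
The plan is to bound the number of $\Fqt$-rational points on $\C$ by exploiting the fact that each such point, being a point of $V_2$, lies on exactly $q+1$ generators of $V_2$, and then to show that if $\C$ had an $\Fqt$-rational point we could force $\deg \C \ge q+1$. Concretely, suppose for contradiction that $P \in \C(\Fqt)$. First I would recall the local structure of $V_2$ at $P$: by Theorem \ref{linear} and Remark \ref{linetp}, the tangent plane $\Pi_P$ to $V_2$ at $P$ meets $V_2$ in exactly $q+1$ generators through $P$, and every generator of $V_2$ through $P$ lies in $\Pi_P$. Since $\C \subset V_2$ and $\C$ is a curve, the tangent line (or tangent cone) to $\C$ at $P$ lies inside the tangent plane $\Pi_P$.

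The key step is to intersect $\C$ with the tangent plane $\Pi_P$. Write $\C \cap \Pi_P$; since $\C$ contains no lines defined over $\Fqt$, in particular $\C$ contains none of the $q+1$ generators through $P$, so $\C$ is not contained in $\Pi_P$ and $\C \cap \Pi_P$ is a finite scheme of degree $\deg \C$ (counted with multiplicity) by B\'ezout. Now I claim that $P$ occurs in this intersection with multiplicity at least... well, here is where one must be careful: a single smooth point of $\C$ transverse to $\Pi_P$ contributes only $1$. The real input must instead come from a Frobenius/orbit argument combined with the generators. The approach I favour: consider the $q+1$ generators $\ell_0, \dots, \ell_q$ of $V_2$ through $P$. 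Each $\ell_i$ meets $\C$; since $\ell_i \subset V_2$ and $\C \subset V_2$, and $\ell_i \not\subset \C$, the intersection $\ell_i \cap \C$ is nonempty (it contains $P$) and finite. The point $P$ lies on all $q+1$ of them. One then shows, using that $\C$ is defined over $\Fqt$ and contains no $\Fqt$-lines, that $\C$ must actually meet each generator through $P$ in a point other than $P$ as well, OR that the local intersection multiplicity of $\C$ with the union of generators at $P$ is large — in either case producing a lower bound of the form $\deg \C \ge q+1$. A clean way to see this: the union $\bigcup_{i=0}^{q} \ell_i$ of generators through $P$ lies in $\Pi_P$ and has degree $q+1$; the scheme $\C \cap \Pi_P$ has degree $\deg \C < q+1$ and contains $P$; but every point of $\C \cap \Pi_P$ lies on $V_2 \cap \Pi_P = \bigcup_i \ell_i$, and conversely the first-order behaviour of $\C$ at $P$ (its tangent direction) must point along one of the $\ell_i$ since $V_2$ is smooth at $P$ with tangent plane $\Pi_P$ and $\C$ is a curve on $V_2$ — wait, that only pins down the tangent plane, not the tangent line.

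So the genuinely delicate point — and the one I expect to be the main obstacle — is extracting a local intersection multiplicity of at least $q+1$ at $P$ from the geometry, rather than the trivial lower bound of $1$. I expect the resolution uses that $\C$ is defined over $\Fqt$ while the Frobenius $\mathrm{Frob}_{q^2}$ acts on $\C$: every component of $\C$ through $P$ is either defined over $\Fqt$ (impossible for a line, and a nonlinear $\Fqt$-component already has degree $\ge 2$, so one induces/iterates) or comes in a Frobenius orbit. Combined with the constraint that all of $\C$ lies on $V_2$ and meets the tangent plane $\Pi_P$ only inside the $q+1$ generators, a counting argument over the $\Fqt$-points of $\C \cap \Pi_P$ distributed among $q+1$ generators, each contributing at least one point, should force $\deg(\C \cap \Pi_P) \ge q+1$ unless $\C(\Fqt) \cap \Pi_P$ is empty — and since $P$ itself is such a point, we get $\deg \C \ge q+1$, contradicting the hypothesis. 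The bookkeeping of intersection multiplicities at $P$ versus spread-out points, and making the Frobenius-orbit dichotomy rigorous, is where the real work lies; the rest is B\'ezout plus Theorem \ref{linear}.
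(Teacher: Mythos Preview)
Your setup is exactly right: pick $P\in\C(\Fqt)$, take the tangent plane $\Pi_P$, and use that $V_2\cap\Pi_P=\bigcup_{i=1}^{q+1}\ell_i$ with all $\ell_i$ passing through $P$. You also correctly isolate the crux: why the local contribution of $P$ to $\C\cap\Pi_P$ is at least $q+1$ rather than just $1$. But your proposed resolution via Frobenius orbits is a wrong turn, and the sketch that ``$\C$ must meet each generator in a point other than $P$'' is neither true nor needed.

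The missing observation is purely intersection-theoretic and has nothing to do with Frobenius. Because $\C\subset V_2$, the scheme $\C\cap\Pi_P$ coincides with $\C\cap(V_2\cap\Pi_P)$; equivalently, the projection formula for the closed immersion $V_2\hookrightarrow\PP^3$ gives
\[
\mathsf{i}(\C,\Pi_P;P)_{\PP^3}=\mathsf{i}(\C,\,V_2\cap\Pi_P;P)_{V_2}=\sum_{i=1}^{q+1}\mathsf{i}(\C,\ell_i;P)_{V_2}.
\]
Each summand is at least $1$ since $\ell_i\not\subset\C$ and $P\in\C\cap\ell_i$, so the left-hand side is $\ge q+1$. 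Combined with $\mathsf{i}(\C,\Pi_P;P)_{\PP^3}\le\deg\C$ (B\'ezout, using $\C\not\subset\Pi_P$), this forces $\deg\C\ge q+1$. You actually brushed against this when you wrote ``the local intersection multiplicity of $\C$ with the union of generators at $P$ is large,'' but you did not notice that this quantity \emph{equals} $\mathsf{i}(\C,\Pi_P;P)$ precisely because $\C$ already lies on $V_2$. That single identity replaces all of the Frobenius bookkeeping you were anticipating.
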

\begin{proof}
Let us assume that $\C(\F_{q^2})$ is not empty and fix a point $P \in \C(\F_{q^2})$.
We denote by $\Pi$ the tangent plane to $V_2$ at $P$.
Theorem \ref{linear} implies that $V_2 \cap \Pi = \bigcup_{i=1}^{q+1}\ell_i$, where the $\ell_i$ are the generators passing through $P$.
Therefore the intersection multiplicity of $\C$ and $\Pi$ at $P$ in $\PP^3$ satisfies:
$$\mathsf{i}(\C,\Pi;P)_{\mathbb{P}^3}=\mathsf{i}(\C,\Pi\cap V_2;P)_{V_2}=\sum_{i=1}^{q+1}\mathsf{i}(\C,\ell_i;P)_{V_2} \ge q+1.$$
In the first equality, we used the projection formula in intersection theory, see for example \cite[Appendix A, Section 1]{H}.
In the final inequality we used that for each line $\ell_i$ we have $\mathsf{i}(\C,\ell_i;P)_{V_2} \ge 1$, since $\C$ has none of the $\ell_i$ as
component.
On the other hand, again using that $\C$ contains none of the $\ell_i$, we have $\mathsf{i}(\C,\Pi;P)_{\mathbb{P}^3} \le \deg \C.$
This concludes the proof.
\end{proof}


The following Theorem gives an upper bound on the number of $\Fqt$-rational points on $V(F) \cap V_2$ in terms of the number of generators
contained in the surface $V(F)$.

\begin{theorem}\label{thm:bound}
Let $\X$ and $\delta$ be as in Definition \ref{def:xdelta}. We have
$$|\X (\Fqt)| \le d(q^3+q^2-d+2)-\frac{\delta}{q+1}(q^2-d+1).$$
\end{theorem}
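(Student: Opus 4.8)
The plan is to split $\X = \LL \cup \X'$ as in Definition \ref{def:xdelta} and bound the rational points of the two pieces separately, then control the overlap. The component $\LL$ is a union of $|\J_F| = d(q+1)-\delta$ generators. Since each generator has $q+1$ rational points, a crude bound is $|\LL(\Fqt)| \le (d(q+1)-\delta)(q+1)$, but this over-counts points lying on several generators; the standard incidence facts about generators on a Hermitian surface (each point of $V_2$ lies on exactly $q+1$ generators, and two generators meet in at most one point) let one replace this with something closer to what is needed. For $\X'$, which is a curve of degree $\deg\X' \le \delta$ containing no $\Fqt$-line, the goal is to invoke a bound of the form $|\X'(\Fqt)| \le$ (roughly) $\deg\X' \cdot (q^2 - \deg\X' + 2)$ or a comparable Serre/Homma--Kim-type bound for curves on the Hermitian surface; Lemma \ref{lem:degbound} already handles the degenerate case $\deg\X' < q+1$ by showing $\X'(\Fqt)=\emptyset$, which is exactly where the coefficient $(q^2-d+1)$ in the statement must come from.

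First I would estimate $|\LL(\Fqt)|$. Write $n := |\J_F| = d(q+1)-\delta$ for the number of generators in $\X$. Using that any two distinct generators intersect in at most one point and that each rational point of $V_2$ lies on at most $q+1$ generators, an inclusion–exclusion / double-counting argument gives an upper bound on $|\LL(\Fqt)|$ that is a concave function of $n$; the extremal configuration is when the generators are spread among as few common points as possible. The key numerical input is that the tangent plane at a point $P$ meets $V_2$ in exactly $q+1$ generators through $P$ (Theorem \ref{linear}), so a "book" of generators through a single point contributes $(q+1)q + 1$ rational points for $q+1$ generators, i.e.\ each generator beyond the first in such a pencil contributes only $q$ new points. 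I would organize the $n$ generators into such pencils and bound accordingly, arriving at $|\LL(\Fqt)| \le n q + (\text{number of pencils}) \le$ an expression of the shape $d(q+1)q - \delta q + (\text{lower order})$. Matching this against the claimed bound is the bookkeeping heart of the argument.

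Next I would bound $|\X'(\Fqt)|$. If $\deg\X' < q+1$ then $\X'(\Fqt) = \emptyset$ by Lemma \ref{lem:degbound}, so assume $q+1 \le \deg\X' \le \delta$. Here I would use a bound for the number of rational points of a (possibly reducible) curve of given degree on $V_2 \subset \PP^3$ that contains no $\Fqt$-rational line — for instance the fact that such a curve, intersected with any tangent plane, has intersection multiplicity $\ge q+1$ at each of its rational points lying in that plane (the computation in the proof of Lemma \ref{lem:degbound}), combined with the classification of plane sections. Summing over a suitable pencil of tangent planes, or applying a known effective bound, should yield $|\X'(\Fqt)| \le \deg\X' \cdot \frac{q^2-\deg\X'+1}{q+1} + (\text{stuff already counted in } \LL)$, and then monotonicity in $\deg\X'$ (using $\deg\X' \le \delta$ and $\delta \le d(q+1)$, hence $\delta/(q+1) \le d \le$ whatever is needed) converts this into the $-\frac{\delta}{q+1}(q^2-d+1)$ correction term. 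Finally I would add the two estimates, being careful that points on $\LL \cap \X'$ are not double-counted (they are already in $\LL(\Fqt)$, so the $\X'$ contribution should really be $|\X'(\Fqt) \setminus \LL(\Fqt)|$, which only helps).

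The main obstacle I expect is making the $\X'$ estimate sharp enough to produce exactly the coefficient $(q^2-d+1)$ rather than something weaker like $(q^2)$ or $(q^2-\deg\X'+1)$: one needs $\delta$ (which involves $\deg\X'$ only through the inequality $\deg\X' \le \delta$) to enter with the right linear profile, and this forces a careful concavity/monotonicity argument rather than a term-by-term bound. A secondary difficulty is handling the interaction between $\LL$ and $\X'$: a rational point could lie on many generators and on $\X'$ simultaneously, and the intersection-multiplicity argument of Lemma \ref{lem:degbound} must be applied to the relevant component of $\X$, not all of $\X$, so one has to be careful that the generators through such a point do not "use up" the available intersection multiplicity budget that one wants to allocate to $\X'$.
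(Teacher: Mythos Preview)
Your strategy has a genuine gap: you are trying to bound $|\LL(\Fqt)|$ and $|\X'(\Fqt)|$ separately, and for the latter you invoke an unspecified ``Serre/Homma--Kim-type bound'' that is neither stated nor proved anywhere, while admitting yourself that obtaining the precise coefficient $(q^2-d+1)$ this way is the ``main obstacle''. That obstacle is real: there is no obvious way to extract exactly the term $-\frac{\delta}{q+1}(q^2-d+1)$ from a direct estimate on $|\X'(\Fqt)|$, and the concavity/monotonicity hand-waving you describe does not pin down the constants. (Incidentally, a generator, being a line over $\Fqt$, has $q^2+1$ rational points, not $q+1$.)

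The paper's proof bypasses $\X'$ entirely with a single double-count. Consider the incidence set $\mathcal{M}=\{(P,\ell): P\in \X(\Fqt),\ \ell\in\J,\ P\in\ell\}$, where $\J$ is the set of \emph{all} generators of $V_2$. Projecting onto the first coordinate: every $P\in\X(\Fqt)\subset V_2(\Fqt)$ lies on exactly $q+1$ generators, so $|\mathcal{M}|=(q+1)|\X(\Fqt)|$. Projecting onto the second coordinate: if $\ell\in\J_F$ then $|p_2^{-1}(\ell)|=q^2+1$, while if $\ell\in\J\setminus\J_F$ then $\ell\not\subset V(F)$ and B\'ezout gives $|p_2^{-1}(\ell)|\le d$. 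Hence
\[
(q+1)|\X(\Fqt)| \le (q^2+1)|\J_F| + d\bigl(|\J|-|\J_F|\bigr),
\]
and substituting $|\J_F|=d(q+1)-\delta$ and $|\J|=(q^3+1)(q+1)$ yields the stated inequality after a short computation. The point you missed is that rational points of $\X'$ are still points of $V_2$, hence still lie on $q+1$ generators, and those generators (not being in $\J_F$) each meet $V(F)$ in at most $d$ points; so the incidence count already controls $\X'$ without ever isolating it.
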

\begin{proof}
As before, $\J$ denotes the set of all generators of $V_2$.
Consider the set $\mathcal M:=\{(P,\ell) \in V(F)(\Fqt) \times \J \, : \, P \in \ell\}$ and the natural projection maps
$p_1:\mathcal M \rightarrow V(F)(\Fqt)$ and $p_2:\mathcal M \rightarrow \J.$
Then $\im(p_1)=\X(\Fqt)$.
Moreover, as observed in Section \ref{prel}, any $\Fqt$-rational point  $P \in V_2$ lies on exactly $q+1$ generators.
This implies that the fibre of any point in $V(F)(\Fqt)$ with respect to $p_1$ has $q+1$ elements. Furthermore, $|p_2^{-1}(\ell)|=q^2+1$ if $\ell \in \J_F$, while $|p_2^{-1}(\ell)| \le d$ if $\ell \in \J \setminus \J_F.$ Thus,
\begin{align*}\label{eq:doublecount}
|\X(\Fqt)|(q+1)=  |\mathcal M| & \le (q^2+1)|\J_F|+d|\J \setminus \J_F|\\
                    = & (d(q+1)-\delta)(q^2+1)+d((q^3-d+1)(q+1)+\delta)\\
                    = & d(q+1)(q^3+q^2-d+2)-\delta(q^2-d+1).
\end{align*}
The assertion now follows.
\end{proof}


As an immediate corollary of Lemma \ref{lem:degbound} and Theorem \ref{thm:bound} we obtain that the upper bound in the S{\o}renses's
conjecture is valid if $\X'$ contains an $\Fqt$-rational point:

\begin{corollary}\label{cor:bound0}
Let $\X, \X'$ and $\delta$ be as in Definition \ref{def:xdelta}. If $\X'(\Fqt) \neq \emptyset$, then
$$|\X (\Fqt)| \le dq^3+(d-1)q^2+1-(d-1)(d-2).$$
\end{corollary}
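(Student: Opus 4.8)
The plan is to read the bound off directly from Theorem \ref{thm:bound}, once we have observed that the hypothesis $\X'(\Fqt)\neq\emptyset$ forces the quantity $\delta$ to be large. First I would note that, by its construction in Definition \ref{def:xdelta}, the curve $\X'$ is contained in $V_2$ and contains no line defined over $\Fqt$, so Lemma \ref{lem:degbound} applies to $\C=\X'$. Reading that lemma in contrapositive form, the assumption $\X'(\Fqt)\neq\emptyset$ implies $\deg\X'\ge q+1$; combined with the inequality $\deg\X'\le\delta$ recorded just after Definition \ref{def:xdelta}, this gives $\delta\ge q+1$.

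Next I would substitute this lower bound for $\delta$ into Theorem \ref{thm:bound}. Since the coefficient $q^2-d+1$ of $-\delta/(q+1)$ is nonnegative for the degrees relevant to the conjecture (in particular for $d\le q+1$), replacing $\delta$ by the smaller value $q+1$ only enlarges the right-hand side, so that
$$|\X(\Fqt)|\le d(q^3+q^2-d+2)-(q^2-d+1).$$
Expanding the right-hand side gives $dq^3+(d-1)q^2-d^2+3d-1$, and using $(d-1)(d-2)=d^2-3d+2$ one checks that this equals $dq^3+(d-1)q^2+1-(d-1)(d-2)$, which is exactly the claimed bound.

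The argument is short enough that there is no real obstacle; the only point needing a little care is the sign of $q^2-d+1$, on which the monotonicity step above depends. In the range of degrees in which this corollary is used this sign is automatically nonnegative, and I would state the relevant degree hypothesis explicitly rather than appeal to Theorem \ref{thm:bound} for arbitrarily large $d$. All other ingredients are a direct citation of Lemma \ref{lem:degbound}, Theorem \ref{thm:bound}, and the elementary bound $\deg\X'\le\delta$.
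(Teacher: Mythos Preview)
Your argument is correct and follows exactly the same route as the paper's own proof: apply Lemma~\ref{lem:degbound} in contrapositive to obtain $\delta\ge\deg\X'\ge q+1$, then feed this into Theorem~\ref{thm:bound}. Your explicit attention to the sign of $q^2-d+1$ is a worthwhile refinement, since the paper leaves that monotonicity step implicit.
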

\begin{proof}
Since $\X'$ is a curve contained in $V_2$ and $\X' (\Fqt) \neq \emptyset$, Lemma \ref{lem:degbound} implies that
$\delta  \ge \deg \X' \ge q+1.$ Using Theorem \ref{thm:bound} we obtain the result.
\end{proof}

\begin{remark}\label{rem:one}\normalfont
Note that for any $d \ge 1$ the upper bound derived in Corollary \ref{cor:bound0} is significantly stronger than the upper bound in
Conjecture \ref{conj:soerensen}.
This, in particular, implies that if a surface $V(F)$ of degree $d$, as always not containing $V_2$, attains the upper bound in S\o rensen's Conjecture,
then $V(F) \cap V_2$ is a union of at most $d(q+1)$ lines defined over $\Fqt$ and a curve without any $\Fqt$-rational points.
\end{remark}

\section{The case $V(F)$ does not contain a tangent plane of $V_2$}

In the previous section, we have already proved that the Conjecture \ref{conj:soerensen} is true if $\X' (\Fqt) \neq \emptyset$.
In this section, we restrict our attention to the case where $\X' (\Fqt) = \emptyset$, i.e. the case where all the $\Fqt$-rational points lie on
the $\Fqt$-linear components of $\X$.
Hence we will analyze the maximum number of $\Fqt$-rational points on various arrangements of lines defined over $\Fqt$.
The following definition will be crucial.
\begin{definition}\normalfont
For $\ell \in \J_F$, we define
$$\T(\ell):=\{m \in \J_F \mid m \neq \ell, \ \ell \cap m \neq \emptyset\}  \ \text{ and } \ X:= \min_{\ell \in \J_F}| \T(\ell)|.$$
Further, for $\Pi \in \B(\ell),$ we define
$$\T_\Pi (\ell) := \{m \in \T(\ell) \mid m \subset \Pi\} \ \text{ and } \ a_{\Pi, \ell}:=|\T_\Pi (\ell)|.$$
\end{definition}

Note that $X \le |\J_F|-1 \le d(q+1)-1.$ Further, for any $\ell \in \J_F,$ we have
\begin{equation}\label{eq:sumofaPi}
\T (\ell) = \bigsqcup_{\Pi \in \B(\ell)} \T_\Pi (\ell) \ \ \text{and consequently,} \ \  |\T(\ell)| = \sum_{\Pi \in \B(\ell)} a_{\Pi, \ell}.
\end{equation}
Moreover, since $\ell \subset V(F) \cap \Pi$ and $\deg(V(F) \cap \Pi) =d$ if $\Pi \not\subset \V(F)$, we have:
\begin{equation}\label{eq:rangeaPi}
0 \le a_ {\Pi, \ell} \le d-1, \ \text{if} \ \Pi \not\subset \V(F).
\end{equation}
With this in place, we will derive several upper bounds on the cardinality of
$\X(\Fqt).$

\begin{theorem}\label{bound1}
Suppose that $V(F)$ does not contain a tangent plane of $V_2$.
If $\X' (\Fqt) = \emptyset$, then
$$|\X (\Fqt)| \le q^2+1+(d-1)(q^3+q)+(q^2-q)X.$$
In particular, if $X \le q+d-1$ and $\X' (\Fqt) = \emptyset$, then $|\X (\Fqt)| \le dq^3 + (d-1)q^2 + 1.$
\end{theorem}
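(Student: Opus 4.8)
The plan is to count $\Fqt$-rational points on $\X$ by focusing on a single generator $\ell_0 \in \J_F$ achieving the minimum $|\T(\ell_0)| = X$, and organizing all $\Fqt$-points of $\X$ according to which plane of the book $\B(\ell_0)$ they lie in. Since $\X'(\Fqt) = \emptyset$, every $\Fqt$-point of $\X$ lies on some line in $\J_F$, and every such line meets $\ell_0$ (hence lies in some $\Pi \in \B(\ell_0)$) unless it is skew to $\ell_0$. So first I would split $\X(\Fqt)$ into the points on $\ell_0$ itself (there are $q+1$ of them) and, for each plane $\Pi \in \B(\ell_0)$, the points of $\X \cap \Pi$ off $\ell_0$. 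Because $V(F)$ contains no tangent plane of $V_2$, no $\Pi \in \B(\ell_0)$ is contained in $V(F)$ (a plane in $\B(\ell_0)$ through the generator $\ell_0$ is tangent to $V_2$ by Proposition~\ref{descbook}(a)), so $\deg(V(F) \cap \Pi) = d$ and the curve $V(F) \cap \Pi$ decomposes as $\ell_0$ together with a degree $d-1$ plane curve; the generators of $\J_F$ contained in $\Pi$ and meeting $\ell_0$ number exactly $a_{\Pi,\ell_0} \le d-1$.

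The heart of the argument is a per-plane estimate. Fix $\Pi \in \B(\ell_0)$; since $\Pi$ is tangent to $V_2$ at the point $P_\Pi$ where it touches, $V_2 \cap \Pi$ is the union of the $q+1$ generators through $P_\Pi$ (Theorem~\ref{linear}). The $\Fqt$-points of $\X \cap \Pi$ lying off $\ell_0$ must lie on the $a_{\Pi,\ell_0}$ generators of $\J_F$ inside $\Pi$ other than $\ell_0$ — but I must be careful: there could also be generators of $\J_F$ inside $\Pi$ that do \emph{not} meet $\ell_0$ (these are precisely the generators through $P_\Pi$ when $P_\Pi \notin \ell_0$, i.e. when $\ell_0$ is not one of the generators through $P_\Pi$; but if $\ell_0 \subset \Pi$ and $\ell_0$ is a generator, then $P_\Pi \in \ell_0$ actually, so all generators of $V_2 \cap \Pi$ pass through $P_\Pi \in \ell_0$ and hence meet $\ell_0$). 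Thus every generator of $\J_F$ inside $\Pi$ meets $\ell_0$, so it is counted by $a_{\Pi,\ell_0}$, and the $\Fqt$-points of $\X \cap \Pi$ off $\ell_0$ lie on a union of $a_{\Pi,\ell_0}$ generators all passing through $P_\Pi \in \ell_0$; removing the common point $P_\Pi$, each contributes at most $q$ new points, giving at most $a_{\Pi,\ell_0} \cdot q$ points of $\X \cap \Pi$ strictly off $\ell_0$. Since there are $q^2+1$ planes in $\B(\ell_0)$ and $\sum_{\Pi} a_{\Pi,\ell_0} = |\T(\ell_0)| = X$, summing over all planes yields
$$|\X(\Fqt)| \le (q+1) + q \cdot X.$$
Wait — this is far stronger than claimed, so the subtlety I am glossing over is that generators in $\J_F$ meeting $\ell_0$ but \emph{not} lying in any single plane with $\ell_0$... no, two intersecting lines always span a plane. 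The genuine gap is that $\J_F$ may contain generators skew to $\ell_0$, contributing points not captured by the $\T(\ell_0)$ count at all; these must be bounded separately, and the bound $q^2+1+(d-1)(q^3+q)+(q^2-q)X$ suggests that the skew generators (there can be up to roughly $d(q+1) - X$ of them) are handled by a crude "at most $q+1$ points each" estimate refined using the plane structure, which is where the terms $(d-1)(q^3+q)$ come from.

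So the real plan: bound the contribution of skew generators. Let $\mathcal S$ be the set of generators in $\J_F$ skew to $\ell_0$. For each $\Pi \in \B(\ell_0)$, the generators of $\mathcal S$ meeting $\Pi$ meet it in $\Pi \setminus \ell_0$, in points of $V_2 \cap \Pi = \bigcup$ (generators through $P_\Pi$); one sorts these and finds each $\Pi$ absorbs at most $(d-1)$ generators-worth, times $q$ points, plus bookkeeping, and summing the $(d-1)q$ over the $q^2+1$ planes — minus the overcount along $\ell_0$ — produces the $(d-1)(q^3+q)$ term, with the $q+1$ coming from $\ell_0$ and the $(q^2-q)X$ correction absorbing the generators that \emph{do} meet $\ell_0$. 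I would make this precise by a double count: $|\X(\Fqt) \setminus \ell_0| \le \sum_{\Pi \in \B(\ell_0)} |(\X \cap \Pi)(\Fqt) \setminus \ell_0|$ and then for each $\Pi$ bound the right side using (i) the generators of $\J_F$ in $\Pi$ through $P_\Pi$, of which there are at most $\min(q+1, \#\{\text{relevant}\})$, distinguishing those meeting $\ell_0$ (counted in $a_{\Pi,\ell_0}$, total $X$) from those not, using \eqref{eq:rangeaPi} to cap the latter count per plane by roughly $d-1$. The main obstacle is getting this per-plane bookkeeping exactly right — in particular correctly separating the contribution of generators through $P_\Pi$ into the ones meeting $\ell_0$ versus skew to $\ell_0$, and ensuring the point $P_\Pi$ and the points on $\ell_0$ are not double-counted across the $q^2+1$ planes — so that the sum telescopes into precisely $q^2+1+(d-1)(q^3+q)+(q^2-q)X$ rather than something weaker. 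Once the inequality is established, the "in particular" clause is immediate: substituting $X \le q + d - 1$ gives $|\X(\Fqt)| \le q^2 + 1 + (d-1)(q^3+q) + (q^2-q)(q+d-1)$, and expanding shows this equals $dq^3 + (d-1)q^2 + 1$ after cancellation, which I would verify by a short direct computation.
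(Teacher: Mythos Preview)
Your high-level structure matches the paper's: pick $\ell_0\in\J_F$ with $|\T(\ell_0)|=X$, use that every plane $\Pi\in\B(\ell_0)$ is tangent to $V_2$ (Proposition~\ref{descbook}(a)) and hence not contained in $V(F)$, and bound $|\X(\Fqt)\setminus\ell_0|$ by summing $|\X(\Fqt)\cap(\Pi\setminus\ell_0)|$ over $\Pi\in\B(\ell_0)$. The final ``in particular'' substitution is also correct.

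There are two genuine problems, however. First, you repeatedly use $q$ where $q^2$ belongs: a line in $\PP^3(\Fqt)$ has $q^2+1$ rational points, not $q+1$, so $|\ell_0(\Fqt)|=q^2+1$ and each $m\in\T_\Pi(\ell_0)$ contributes $q^2$ points off $\ell_0$, not $q$. With these corrections your ``too-strong'' bound becomes $|\X(\Fqt)|\le (q^2+1)+q^2 X$, which is no longer too strong---but it is still wrong, for the reason you yourself flag: points of $\X$ lying on generators skew to $\ell_0$ are not accounted for.

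Second, and more seriously, your plan to handle the skew generators by a direct count per plane never crystallizes, and the paper's actual mechanism is quite different. The paper does \emph{not} track skew generators at all. Instead, for each $\Pi\in\B(\ell_0)$ it removes from $V(F)\cap\Pi$ and from $V_2\cap\Pi$ the lines $\ell_0$ and the $a_{\Pi,\ell_0}$ generators in $\T_\Pi(\ell_0)$, leaving residual plane curves $W_1$ of degree at most $d-1-a_{\Pi,\ell_0}$ and $W_2$ of degree at most $q-a_{\Pi,\ell_0}$ with no common component; Bezout then gives
\[
|\X(\Fqt)\cap(\Pi\setminus\ell_0)|\ \le\ a_{\Pi,\ell_0}\,q^2+(d-1-a_{\Pi,\ell_0})(q-a_{\Pi,\ell_0}).
\]
Every point coming from a skew generator hitting $\Pi$ automatically lands in $W_1\cap W_2$ and is absorbed by the Bezout term. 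Summing this over $\Pi$, expanding, using $\sum_\Pi a_{\Pi,\ell_0}=X$ and the bound $\sum_\Pi a_{\Pi,\ell_0}^2\le (d-1)\sum_\Pi a_{\Pi,\ell_0}=(d-1)X$ (from $a_{\Pi,\ell_0}\le d-1$), one gets exactly $q^2+1+(d-1)(q^3+q)+(q^2-q)X$. This single Bezout step is the missing idea in your plan; without it the per-plane bookkeeping you sketch does not produce the stated inequality.
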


\begin{proof}
If $\J_F=\emptyset$, then $|\X(\Fqt)|=|\X'(\Fqt)|=0$ and the theorem follows.
Otherwise, choose $\ell \in \J_F$ such that $|\T(\ell)|=X$ and fix
$\Pi \in \B(\ell)$.
Further define
$$W_1:=(V(F) \cap \Pi)\setminus \left(\ell \cup \bigcup_{m \in \T_{\Pi} (\ell)}m\right) \, \text{ and } \, W_2:=(V_2 \cap \Pi)\setminus \left(\ell \cup \bigcup_{m \in \T_{\Pi} (\ell)}m\right).$$
Since $\Pi$ contains the generator $\ell$, it is a tangent plane to $V_2$ and by hypothesis $\Pi \not\subset V(F)$.
It follows therefore that
$$\deg ( W_1) \le d-1 - a_{\Pi, \ell} \quad \text{and} \quad  \deg (W_2)  \le q - a_{\Pi, \ell}.$$
Also, $W_1$ and $W_2$ have no common components by definition of $\J_f$, as all components of $V_2 \cap \Pi$ are generators.
Thus Bezout's theorem applies and yields $$(W_1 \cap W_2)(\Fqt) \le \deg(W_1)\deg (W_2) \le (d-1 - a_{\Pi, \ell})(q - a_{\Pi, \ell}).$$
Further, note that for each $m \in \T_{\Pi} (\ell)$, we have $|(m \cap (\Pi \setminus \ell))(\Fqt)| = q^2$.
Then
\begin{align*}
|\X (\Fqt) \cap (\Pi\setminus \ell)(\Fqt)| & \le  \sum_{m \in \T_{\Pi}(\ell)}|m \cap (\Pi \setminus \ell)|+|(W_1 \cap W_2)(\Fqt)|\\
   &\le a_{\Pi, \ell} q^2+ (d-1-a_{\Pi, \ell})(q-a_{\Pi, \ell}).
\end{align*}

We obtain:
\begin{align*}
|(\X (\Fqt) \setminus \ell((\Fqt))| & \le \sum_{\Pi \in \B(\ell)} |\X (\Fqt) \cap (\Pi\setminus \ell)(\Fqt)| \\
& \le \sum_{\Pi \in \B(\ell)} a_{\Pi, \ell} q^2+ (d-1-a_{\Pi, \ell})(q-a_{\Pi, \ell})  \\
& = \sum_{\Pi \in \B(\ell)} (d-1)q+a_{\Pi, \ell}(q^2-q-d+1)+a_{\Pi, \ell}^2 \\
&  = (d-1)q(q^2+1)+X(q^2-q-d+1)+\sum_{\Pi \in \B(\ell)} a_{\Pi, \ell}^2.
\end{align*}
The last equality follows from equation \eqref{eq:sumofaPi}. Furthermore,
using equations \eqref{eq:sumofaPi} and \eqref{eq:rangeaPi}, we obtain
$$\sum_{\Pi \in \B(\ell)} a_{\Pi, \ell}^2 \le \sum_{\Pi \in \B(\ell)} (d-1)a_{\Pi, \ell}=(d-1) |\T (\ell)| = (d-1)X.$$
This implies,
\begin{align*}
|\X (\Fqt)|  & \le |\ell(\Fqt)|+(d-1)q(q^2+1)+X(q^2-q-d+1)+(d-1)X\\
             &    = q^2+1+(d-1)(q^3+q)+X(q^2-q).
\end{align*}
This completes the proof of the first assertion. The second assertion now follows easily.
\end{proof}
The following lemma gives a somewhat different interpretation of the quantity $a_{\Pi, \ell}$ that will be useful later on.
\begin{lemma}\label{pip}
Let $P \in V_2$ and $r_P:=|\{m \in \J_F \mid P \in m\}|$. For any line $\ell \in \J_F$ containing $P$, we have $r_P = a_{\Pi_P, \ell} + 1$, where $\Pi_P$ is the tangent plane to $V_2$ at $P$.
\end{lemma}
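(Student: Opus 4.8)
The strategy is to exhibit a bijection. Let $\Pi_P$ denote the tangent plane to $V_2$ at $P$; this is well-defined since the non-degenerate Hermitian surface $V_2$ is nonsingular (evident from the normal form \eqref{herm}), and it is the same plane whose existence is already used in Lemma \ref{lem:degbound}. I plan to prove the set equality
\[
\{m \in \J_F \mid P \in m\} = \{\ell\} \sqcup \T_{\Pi_P}(\ell),
\]
the union on the right being disjoint because $\ell \notin \T(\ell)$ by definition. Once this is established, taking cardinalities gives $r_P = 1 + a_{\Pi_P,\ell}$, where we use that $\ell$ belongs to the left-hand set by the hypotheses $\ell \in \J_F$ and $P \in \ell$, and that $\Pi_P \in \B(\ell)$ so that $a_{\Pi_P,\ell}$ is defined (this last point is part of the argument below).

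The one geometric input I would isolate first is the following consequence of Theorem \ref{linear}: since $\Pi_P$ is tangent to $V_2$ at $P$, we have $V_2 \cap \Pi_P = \ell_1 \cup \dots \cup \ell_{q+1}$, the union of the $q+1$ generators through $P$. Hence a generator $m$ (i.e.\ an element of $\J$, in particular any element of $\J_F$) satisfies $m \subset \Pi_P$ if and only if $P \in m$. Indeed, if $m \subset \Pi_P$ then $m \subset V_2 \cap \Pi_P = \bigcup_i \ell_i$, and a line contained in a finite union of lines over $\overline{\FF}_q$ must coincide with one of them (otherwise $m$ would be the union of the finitely many points $m \cap \ell_i$), so $m = \ell_i$ for some $i$ and thus $P \in m$; the converse is immediate. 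In particular $\ell$ itself, being a generator in $\J_F$ through $P$, is contained in $\Pi_P$, so $\Pi_P \in \B(\ell)$.

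With this in hand the set equality is just unwinding definitions. If $m \in \J_F$ with $P \in m$ and $m \neq \ell$, then $m$ is a generator through $P$, hence $m \subset \Pi_P$ by the equivalence above, and $P \in \ell \cap m$ shows $\ell \cap m \neq \emptyset$; thus $m \in \T(\ell)$ with $m \subset \Pi_P$, i.e.\ $m \in \T_{\Pi_P}(\ell)$. Conversely, if $m \in \T_{\Pi_P}(\ell)$ then $m \in \J_F$, $m \neq \ell$, and $m \subset \Pi_P$, so again by the equivalence $P \in m$. This proves the displayed equality and hence the lemma.

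I do not anticipate a genuine obstacle here; the only point worth a word of care is the equivalence "$m \subset \Pi_P \iff P \in m$" for generators $m$, which rests on Theorem \ref{linear} describing $V_2 \cap \Pi_P$ exactly as the pencil of $q+1$ generators through $P$, together with the elementary observation that a line lying in a finite union of lines is one of them.
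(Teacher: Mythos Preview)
Your proof is correct and follows essentially the same approach as the paper: both establish the set equality $\{m \in \J_F \mid P \in m\} = \{\ell\} \sqcup \T_{\Pi_P}(\ell)$ by using Theorem \ref{linear} to identify $V_2 \cap \Pi_P$ with the $q+1$ generators through $P$, so that for a generator $m$ the conditions $P \in m$ and $m \subset \Pi_P$ are equivalent. Your write-up is simply more explicit than the paper's terse version, in particular spelling out why $\Pi_P \in \B(\ell)$ and why a line contained in a finite union of lines must equal one of them.
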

\begin{proof}
First, we note that any line $m \in \J_F$ is a generator of $V_2$ and consequently any such line passing through $P$ lies in $\Pi_P$.
This in particular implies that $a_{\Pi_P, \ell}$ is well defined and that $\{m \in \J_F \mid P \in m\} \subseteq \T_{\Pi_P} (\ell) \cup \{ \ell \}$. Further, Theorem \ref{linear} proves the reverse inclusion and the lemma follows.
%
%
%
\end{proof}


\begin{theorem}\label{thm:bound2}
Suppose that $V(F)$ does not contain a tangent plane of $V_2$.
If $\X' (\Fqt) = \emptyset$, then
$$|\X (\Fqt)|  \le d(q+1)\left(q^2+1-\frac{X}{d}\right).$$
In particular, if $X \ge q+ d-1$ and $\X' (\Fqt) = \emptyset$, then $|\X (\Fqt)| \le dq^3 + (d-1)q^2 + 1$.
\end{theorem}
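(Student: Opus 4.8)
The plan is to bound $|\X(\Fqt)|$ by a double-counting argument that is in a sense dual to the one used in Theorem \ref{bound1}: instead of fixing a minimal line $\ell\in\J_F$ and working plane by plane inside the tangent book $\B(\ell)$, I would count incidences between the $\Fqt$-rational points of $\X$ and the lines of $\J_F$. The guiding observation is Lemma \ref{pip}: for each $P\in V_2$ the number $r_P$ of lines of $\J_F$ through $P$ equals $a_{\Pi_P,\ell}+1$ for any $\ell\in\J_F$ through $P$, and by \eqref{eq:rangeaPi} this gives $r_P\le d$ whenever the tangent plane $\Pi_P$ is not contained in $V(F)$ — which holds by the hypothesis that $V(F)$ contains no tangent plane of $V_2$. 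So every $\Fqt$-rational point of $\LL=\bigcup_{\ell\in\J_F}\ell$ lies on at least one and at most $d$ of the lines in $\J_F$. Since $\X'(\Fqt)=\emptyset$, we have $\X(\Fqt)=\LL(\Fqt)$, so it suffices to bound $|\LL(\Fqt)|$.

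The key step is to consider, for a fixed line $\ell\in\J_F$ with $|\T(\ell)|=X$, the count of pairs. First I would write
$$|\LL(\Fqt)| = |\ell(\Fqt)| + |\LL(\Fqt)\setminus\ell(\Fqt)| = (q^2+1) + \Big|\bigcup_{m\in\J_F,\,m\neq\ell} m(\Fqt)\setminus\ell(\Fqt)\Big|.$$
Only the lines $m\in\T(\ell)$ meet $\ell$; each such line contributes at most $q^2$ new points (it meets $\ell$ in one point), and the lines $m\in\J_F\setminus(\T(\ell)\cup\{\ell\})$ are disjoint from $\ell$ and contribute at most $q^2+1$ points each. This already gives a bound, but the one in the statement is sharper, so I expect the real argument to count incidences more carefully: sum $r_P$ over $P\in\LL(\Fqt)$. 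On one hand $\sum_{P\in\LL(\Fqt)} r_P = \sum_{m\in\J_F}|m(\Fqt)| = |\J_F|(q^2+1) = (d(q+1)-\delta)(q^2+1)$. On the other hand, bounding $r_P\le d$ for most points but isolating the points on $\ell$ (where more precise information is available through $\T_{\Pi}(\ell)$ and $a_{\Pi,\ell}$) should yield $\sum_{P\in\LL(\Fqt)} r_P \ge |\LL(\Fqt)| + (\text{contribution forcing } X)$. Matching the two expressions and using $\delta\ge 0$, together with the combinatorial identity $\sum_{\Pi\in\B(\ell)}a_{\Pi,\ell}=X$ from \eqref{eq:sumofaPi}, should produce exactly $|\X(\Fqt)|\le d(q+1)\big(q^2+1-\tfrac{X}{d}\big)$.

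The main obstacle I anticipate is getting the lower bound on $\sum_{P} r_P$ to produce the clean factor $X/d$: one must account precisely for the $X$ lines of $\T(\ell)$ and the overcounting that occurs at their intersection points with $\ell$, and ensure that a point lying on several such lines is not charged incorrectly. The cleanest route is probably to expand $\sum_{P\in\ell(\Fqt)} r_P = \sum_{P\in\ell(\Fqt)}(a_{\Pi_P,\ell}+1) = (q^2+1) + X$ using Lemma \ref{pip} and \eqref{eq:sumofaPi}, then combine this with the estimate $r_P\le d$ at every point of $\LL(\Fqt)$ to get
$$(d(q+1)-\delta)(q^2+1) = \sum_{P\in\LL(\Fqt)} r_P \ge (q^2+1+X) + d\big(|\LL(\Fqt)|-(q^2+1)\big)\cdot\tfrac{1}{1}$$
— more precisely, splitting $\LL(\Fqt)$ into $\ell(\Fqt)$ and its complement and bounding $r_P\le d$ only off $\ell$. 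Solving for $|\LL(\Fqt)|=|\X(\Fqt)|$ and discarding the $-\delta(q^2+1)\le 0$ term yields the asserted inequality. Finally, for the "in particular" clause, substitute $X\ge q+d-1$ into $d(q+1)(q^2+1)-X(q+1)$ and check $d(q+1)(q^2+1)-(q+d-1)(q+1) = dq^3+(d-1)q^2+1$, a short direct computation. Together with Theorem \ref{bound1}, this covers all values of $X$ and hence completes the proof of Conjecture \ref{conj:soerensen} in the case where $\X'(\Fqt)=\emptyset$ and $V(F)$ contains no tangent plane.
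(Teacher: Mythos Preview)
Your incidence count has the inequality pointing the wrong way, and this is not a slip that can be repaired within your scheme. From $r_P\le d$ for $P\notin\ell$ you get
\[
\sum_{P\in\LL(\Fqt)} r_P \;=\; (q^2+1+X)+\sum_{P\in\LL(\Fqt)\setminus\ell(\Fqt)} r_P \;\le\; (q^2+1+X)+d\bigl(|\LL(\Fqt)|-(q^2+1)\bigr),
\]
not $\ge$. Solving this for $|\LL(\Fqt)|$ therefore yields a \emph{lower} bound on $|\X(\Fqt)|$, which is useless here. Even if you instead use only $r_P\ge 1$ off $\ell$, you obtain $|\LL(\Fqt)|\le |\J_F|(q^2+1)-X$, which is far too weak: the target bound is $d(q+1)(q^2+1)-(q+1)X$, so you must produce a term of order $(q+1)X$, not $X$. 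Fixing a single minimal line $\ell$ can only extract $X$ once and cannot close this gap.

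What is missing is that the role of $X$ in the statement is as the \emph{minimum} of $|\T(\ell)|$ over all $\ell\in\J_F$, so it must be invoked once for each of the $|\J_F|\le d(q+1)$ lines. The paper does this as follows: write $|\LL(\Fqt)|=|\J_F|(q^2+1)-\sum_P(r_P-1)$ and then re-express the overcount as a double sum over lines,
\[
\sum_{P\in\LL(\Fqt)}(r_P-1)=\sum_{\ell\in\J_F}\sum_{P\in\ell(\Fqt)}\frac{r_P-1}{r_P}
=\sum_{\ell\in\J_F}\sum_{\Pi\in\B(\ell)}\frac{a_{\Pi,\ell}}{a_{\Pi,\ell}+1}.
\]
Now the hypothesis that $V(F)$ contains no tangent plane gives $a_{\Pi,\ell}+1\le d$ by \eqref{eq:rangeaPi}, hence $\dfrac{a_{\Pi,\ell}}{a_{\Pi,\ell}+1}\ge \dfrac{a_{\Pi,\ell}}{d}$, and summing via \eqref{eq:sumofaPi} gives the inner sum $\ge |\T(\ell)|/d\ge X/d$ for \emph{every} $\ell\in\J_F$. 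This produces $\sum_P(r_P-1)\ge |\J_F|\cdot X/d$, and therefore $|\LL(\Fqt)|\le |\J_F|(q^2+1-X/d)\le d(q+1)(q^2+1-X/d)$. Your computation of the ``in particular'' clause is correct.
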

\begin{proof}
If $\X' (\Fqt) = \emptyset$ then it is clear that $\X (\Fqt) = \LL (\Fqt) = \bigcup_{\ell \in J_F} \ell (\Fqt)$.
From Definition \ref{def:xdelta}, we have $|\J_F|=d(q+1)-\delta$.
By Lemma \ref{pip}, for any $P \in \bigcup_{\ell \in \J_F} \ell,$ the multiplicity $r_P$ of $P$ in the variety $\bigcup_{\ell \in \J_F} \ell$, equals
$a_{\Pi_P, \ell}+1$, where $\Pi_P$ is the tangent plane of $V_2$ at $P$ and  $\ell \in \J_F$ is chosen such that $P \in \ell$.
Since $P$ lies on exactly $r_P=a_{\Pi_P, \ell}+1$ lines of $\bigcup_{\ell \in \J_F} \ell,$ we see that
\begin{align*}
\displaystyle{\sum_{P\in \bigcup_{\ell \in \J_F} \ell}} (r_P-1)= &
\sum_{\ell \in \J_F} \sum_{P \in \ell(\Fqt)} \frac{r_P-1}{r_P}=
\sum_{\ell \in \J_F} \sum_{P \in \ell(\Fqt)} \frac{a_{\Pi_P, \ell}}{a_{\Pi_P, \ell}+1}=
\sum_{\ell \in \J_F} \sum_{\Pi \in \B(\ell)} \frac{a_{\Pi, \ell}}{a_{\Pi, \ell}+1}.
\end{align*}
The last equality is obtained by using the fact that there is a one-one correspondence between the set of $\Fqt$-rational points on $\ell$ and the set
of planes in $\B (\ell)$ that are tangent to $V_2$.
This implies,
\begin{align*}
|\bigcup_{\ell \in \J_F} \ell (\Fqt)| = & (d(q+1)-\delta)(q^2+1)-\sum_{P\in \cup_{\ell \in \J_F} \ell} (r_P-1)\\
=&(d(q+1)-\delta)(q^2+1)-\sum_{\ell \in \J_F} \sum_{\Pi \in \B(\ell)} \frac{a_{\Pi, \ell}}{a_{\Pi, \ell}+1}\\
\le & (d(q+1)-\delta)(q^2+1)-\sum_{\ell \in \J_F} \sum_{\Pi \in \B(\ell)} \frac{a_{\Pi, \ell}}{d}\\
= &(d(q+1)-\delta)(q^2+1)-\sum_{\ell \in \J_F} \frac{|\T(\ell)|}{d}\\
\le & (d(q+1)-\delta)(q^2+1)-\sum_{\ell \in \J_F} \frac{X}{d}\\
= & (d(q+1)-\delta)\left(q^2+1-\frac{X}{d}\right).
\end{align*}
The first inequality follows from \eqref{eq:rangeaPi} while the second inequality follows from the definition of $X$.
Since $X \le d(q+1),$ we have $q^2+1-X/d >0$ and the first claim in the theorem follows.
The second claim follows from a straightforward computation.
\end{proof}

\begin{corollary}\label{cor:bound2}
If $V(F)$ does not contain a plane tangent to $V_2$, then $|\X (\Fqt)| \le dq^3+(d-1)q^2+1.$
\end{corollary}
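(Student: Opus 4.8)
The plan is to obtain the corollary purely as a case analysis built on the three bounds already established, with no new geometric input. The governing dichotomy is whether the curve $\X'$ has $\Fqt$-rational points, and, if not, how large the combinatorial invariant $X$ is.

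First I would dispose of the case $\X'(\Fqt) \neq \emptyset$. Here Corollary \ref{cor:bound0} gives $|\X(\Fqt)| \le dq^3 + (d-1)q^2 + 1 - (d-1)(d-2)$; since $(d-1)(d-2) \ge 0$ for every integer $d \ge 1$, the right-hand side is at most $dq^3 + (d-1)q^2 + 1$, which is exactly the desired bound. So from here on I may assume $\X'(\Fqt) = \emptyset$.

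Now, with $\X'(\Fqt) = \emptyset$ and using the standing hypothesis that $V(F)$ contains no plane tangent to $V_2$, I would split on the value of $X = \min_{\ell \in \J_F} |\T(\ell)|$ (the case $\J_F = \emptyset$ being trivial, since then $|\X(\Fqt)| = |\X'(\Fqt)| = 0$). If $X \le q + d - 1$, the second assertion of Theorem \ref{bound1} applies directly and yields $|\X(\Fqt)| \le dq^3 + (d-1)q^2 + 1$. If instead $X \ge q + d - 1$, the second assertion of Theorem \ref{thm:bound2} applies and yields the same bound. These two ranges of $X$ are exhaustive, overlapping only at $X = q + d - 1$, so in every case the claimed inequality holds.

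There is essentially no obstacle in this final step: all of the substantive work is already carried out in Theorems \ref{bound1} and \ref{thm:bound2} and in Corollary \ref{cor:bound0}. The only point worth a moment's thought is that the two specialized bounds were calibrated so that their thresholds on $X$ meet at $q + d - 1$ and their conclusions agree there; this is precisely what makes the union of the two cases cover all possibilities and what turns the corollary into a one-line consequence.
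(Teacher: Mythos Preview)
Your proposal is correct and is exactly the case analysis the paper has in mind: the one-line proof in the paper simply cites Corollary~\ref{cor:bound0}, Theorem~\ref{bound1}, and Theorem~\ref{thm:bound2}, and you have faithfully spelled out how these three results cover the cases $\X'(\Fqt)\neq\emptyset$, $\X'(\Fqt)=\emptyset$ with $X\le q+d-1$, and $\X'(\Fqt)=\emptyset$ with $X\ge q+d-1$, respectively.
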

\begin{proof}
The assertion follows from Corollary \ref{cor:bound0}, Theorem \ref{bound1} and Theorem \ref{thm:bound2}
\end{proof}

Note that by using the concepts from Definition \ref{def:xdelta}, we have used the assumption made throughout in this article that
$V(F)$ does not contain $V_2$.
If $d \le q$, the bound derived in Corollary \ref{cor:bound2} is strictly smaller than the upper bound conjectured in
Conjecture \ref{conj:soerensen}. Thus to prove the upper bound conjectured in Conjecture \ref{conj:soerensen} it now only needs to be shown in
case $\X'(\Fqt)=\emptyset$ and $V(F)$ contains a tangent plane of $V_2$.
Moreover, if $|(V(F) \cap V_2)(\Fqt)|=d(q^3+q^2-q)+q+1$, then $V(F)$ would have to contain a tangent plane.

\section{Proof of S\o rensen's conjecture}

In this section we present our main result which proves the Conjecture \ref{conj:soerensen}.
We retain the notations and assumptions that were introduced in the previous sections.

\begin{theorem}\label{notan}
If $V(F)$ is not a union of planes that are tangent to $V_2$ then $|\X(\Fqt)| \le dq^3+(d-1)q^2+1$.
\end{theorem}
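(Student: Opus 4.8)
The plan is to reduce to the cases already handled in Sections \ref{red} and the previous section. By Corollary \ref{cor:bound0}, if $\X'(\Fqt) \ne \emptyset$ we are done (indeed with room to spare), so we may assume $\X'(\Fqt) = \emptyset$. By Corollary \ref{cor:bound2}, if $V(F)$ does not contain any plane tangent to $V_2$, we are again done. Hence the only remaining case is: $\X'(\Fqt) = \emptyset$, $V(F)$ contains at least one tangent plane $\Pi_0$ of $V_2$, but $V(F)$ is \emph{not} entirely a union of tangent planes. The first step is therefore to write $F = L_0 \cdot G$ where $L_0$ is the linear form defining $\Pi_0$ and $G$ has degree $d-1$, and to recurse on the degree.

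The main idea is induction on $d$. For the inductive step, split $\X(\Fqt) = (V(F)\cap V_2)(\Fqt)$ as $(\Pi_0 \cap V_2)(\Fqt) \cup (V(G)\cap V_2)(\Fqt)$. Since $\Pi_0$ is a tangent plane, Theorem \ref{linear} gives $|(\Pi_0\cap V_2)(\Fqt)| = q^3+q^2+1$. The overlap, namely points of $V_2$ lying on both $\Pi_0$ and $V(G)$, must be subtracted; since $\Pi_0\cap V_2$ is a union of $q+1$ generators through the tangency point $P_0$, and $V(G)$ meets each such generator in at least one point, the overlap is nonempty, and this is where one gains. One then applies the inductive hypothesis to $G$ (degree $d-1 \le q-1 < q$), obtaining $|(V(G)\cap V_2)(\Fqt)| \le (d-1)q^3+(d-2)q^2+1$ provided $V(G)$ is not itself a union of tangent planes; if $V(G)$ \emph{is} a union of $d-1$ tangent planes then $V(F)$ is a union of $d$ tangent planes, contradicting the hypothesis of the theorem. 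Adding the two bounds and subtracting at least the overlap should yield $dq^3 + (d-1)q^2 + 1$ after a short computation, with the base case $d=1$ being vacuous (a single plane is always ``a union of planes tangent to $V_2$'' only if it is tangent, but a non-tangent plane gives $q^3+1 \le q^3$, false — so actually for $d=1$ the statement reads: if the plane is not tangent then $|\X(\Fqt)| = q^3+1 \le q^3+1$, which holds).

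The hard part will be controlling the overlap $(\Pi_0 \cap V(G) \cap V_2)(\Fqt)$ precisely enough: a naive bound of ``at least one point'' is not enough, since we need to subtract roughly $q^2+1$ points to make the arithmetic close. The resolution is that the overlap contains all points of $V_2$ on every generator of $\Pi_0$ that happens to lie in $V(G)$ as well, plus at least one point on each of the remaining generators; so one needs a lower bound on how many of the $q+1$ generators through $P_0$ are forced to lie in $V(G)$, or alternatively a careful double count over the $q+1$ generators using the fact that $V(G)$ cuts each in a subscheme of degree $\le d-1$ but the union of these subschemes, together with the induction bound on $V(G)$, overdetermines things. An alternative — and likely cleaner — route is to run the double-counting argument of Theorem \ref{thm:bound2} directly: the hypothesis that $V(F)$ is not a union of tangent planes should force some generator $\ell \in \J_F$ (or some point) to have $|\T(\ell)|$ bounded away from the extremal value, i.e. to bound $X$, and then invoke Theorem \ref{bound1} or Theorem \ref{thm:bound2}; the combinatorial content is that a maximal configuration of lines saturating both bounds must arrange the $\J_F$ generators into exactly $d$ pencils, i.e. into $d$ tangent planes. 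I expect the cleanest writeup combines the inductive peeling-off of one tangent plane with this rigidity observation to pin down the overlap.
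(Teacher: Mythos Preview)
Your overall strategy is exactly the paper's: reduce via Corollary~\ref{cor:bound0} and Corollary~\ref{cor:bound2}, then peel off one tangent plane $\Pi_0=V(H)$, write $F=H\cdot G$, and induct on $d$. The induction hypothesis, the observation that $V(G)$ cannot be a union of tangent planes, and the base case are all correct.

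The gap is an arithmetic slip that makes you think the argument is harder than it is. You write that a single point of overlap ``is not enough, since we need to subtract roughly $q^2+1$ points.'' But recompute:
\[
\bigl((d-1)q^3+(d-2)q^2+1\bigr)+\bigl(q^3+q^2+1\bigr)=dq^3+(d-1)q^2+2,
\]
so subtracting \emph{one} point of overlap already gives the desired bound $dq^3+(d-1)q^2+1$. All of your suggested machinery (counting generators of $\Pi_0$ inside $V(G)$, re-running Theorem~\ref{thm:bound2}, rigidity of pencils) is unnecessary.

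To get that single overlap point cleanly, the paper argues as follows. Since $\X'_F(\Fqt)=\emptyset$ and every non-linear component of $V(G)\cap V_2$ is already a non-linear component of $\X$ (because $\Pi_0\cap V_2$ consists only of generators), the non-linear part of $V(G)\cap V_2$ also has no $\Fqt$-rational points. Hence either $(V(G)\cap V_2)(\Fqt)=\emptyset$, in which case $|\X(\Fqt)|\le q^3+q^2+1$ and we are done, or $V(G)$ contains some generator $\ell\in\J_G$. This $\ell$ need not lie in $\Pi_0$; but $\ell$ and $\Pi_0$ are both defined over $\Fqt$ and meet in $\PP^3$, so $(\ell\cap\Pi_0)(\Fqt)\neq\emptyset$, and any such point lies in $V(G)\cap\Pi_0\cap V_2$. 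That is the one point you need.
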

\begin{proof}
We prove the theorem by induction on $d$. Theorem \ref{linear} proves the assertion for $d=1$.
Let $d>1$ and suppose that the theorem holds for every surface $V(G)$, where $\deg G=d-1.$ The proof is divided into several cases.

\noindent
{\bf Case 1:} Suppose that $V(F)$ does not contain a plane tangent to $V_2.$
In this case the assertion is proved using Corollary \ref{cor:bound2}. 

\noindent
{\bf Case 2:} Suppose $V(F)$ contains a plane $\Pi = V(H)$ tangent to $V_2$, where $H \in \Fqt[x_0, x_1, x_2, x_3]$ is a homogeneous polynomial of degree one.
If $\X' (\Fqt) \neq \emptyset$ then the assertion is proved using Corollary \ref{cor:bound0}.  We may thus assume that $\X' (\Fqt) = \emptyset$.
Write $F=G \cdot H$ where $G \in \Fqt[x_0, x_1, x_2, x_3]$ and $\deg G = d-1$.
Note that $V(G)$ is not a union of tangent planes, for otherwise $V(F)$ would be a union of tangent planes.
From the induction hypothesis, we have $|(V(G) \cap V_2)(\Fqt)| \le (d-1)q^3+(d-2)q^2+1.$ 
Since we assume $\X' (\Fqt) = \emptyset$, it is clear that any $\Fqt$-rational point of $V(G) \cap V_2$ lies on a line in $\J_G$.
This implies that either $(V(G) \cap V_2)(\Fqt) = \emptyset$ or that $V(G) \cap V_2$ contains a generator $\ell.$
In the first case, we have $$|(V(F) \cap V_2)(\Fqt)| \le |(V(G) \cap V_2)(\Fqt)|+|(V(H) \cap V_2)(\Fqt)|=q^3+q^2+1,$$
while in the second case, we have
\begin{align*}
|(V(F) \cap V_2)(\Fqt)| \le & |(V(G) \cap V_2)(\Fqt)|+|(\Pi \cap V_2)(\Fqt)|-|(\Pi\cap V(G) \cap V_2)(\Fqt)|\\
 \le & \left((d-1)q^3+(d-2)q^2+1\right)+\left(q^3+q^2+1\right)-|(\ell \cap \Pi)(\Fqt)|\\
 \le & dq^3+(d-1)q^2+1.
\end{align*}
The last inequality follows since $(\ell \cap \Pi)(\Fqt) \neq \emptyset$. This completes the proof.
\end{proof}
\begin{corollary}
Let $F \in \Fqt[x_0, x_1, x_2, x_3]$ be a homogeneous polynomial of degree $d \le q$.
We have
$$|(V(F) \cap V_2)(\Fqt)| \le d(q^3+q^2-q)+q+1.$$
Moreover, equality holds if and only if $V(F)$ is the union of $d$ tangent planes of $V_2$ intersecting in a common secant line.
\end{corollary}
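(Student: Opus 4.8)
The plan is to derive the final corollary as an essentially formal consequence of Theorem \ref{notan} together with the attainability statement in Proposition \ref{attained} and the quantitative results from Section \ref{prel}. First I would dispose of the upper bound: by Theorem \ref{notan}, if $V(F)$ is \emph{not} a union of planes tangent to $V_2$, then $|(V(F)\cap V_2)(\Fqt)| \le dq^3+(d-1)q^2+1$, and a trivial comparison shows $dq^3+(d-1)q^2+1 \le d(q^3+q^2-q)+q+1$ whenever $d \le q$ (the difference is $(d-1)(dq - q^2 - q + 1)\le 0$ in that range, with a little care since we also need $d\ge 1$). So the only remaining case for the upper bound is when $V(F)$ \emph{is} a union of $d$ planes each tangent to $V_2$; here I would argue directly by counting, using inclusion–exclusion over the $d$ tangent planes and Theorem \ref{linear} (each tangent plane contributes $q^3+q^2+1$ points) and the fact that two distinct planes meet in a line which carries at most $q^2+1$ rational points of $V_2$. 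The cleanest route is the double-counting argument already appearing in the proof of Theorem \ref{thm:bound2} / Theorem \ref{bound1}: writing $V(F)$ as a union of tangent planes $\Pi_1,\dots,\Pi_d$, the set $\X(\Fqt)$ equals the union of the generators lying in these planes, and bounding $\sum_{P}(r_P-1)$ from below by observing that the planes pairwise intersect in lines gives the bound $d(q^3+q^2-q)+q+1$.

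Next I would treat the equality case. Suppose $|(V(F)\cap V_2)(\Fqt)| = d(q^3+q^2-q)+q+1$. By the strictness just noted, $V(F)$ must be a union of $d$ tangent planes $\Pi_1,\dots,\Pi_d$ (and, by the remark at the end of Section 4, these must be distinct, as repeating a plane strictly lowers the count once $d\le q$). Let $P_i$ be the point of tangency of $\Pi_i$; by Theorem \ref{linear}, $\Pi_i\cap V_2$ is the union of the $q+1$ generators through $P_i$. The count $d(q^3+q^2-q)+q+1 = d(q^3+q^2+1) - (d-1)(q^2+1) + (d-1)(q-1)\cdot\!$(something) — more precisely, I would compute the exact inclusion–exclusion defect $\sum_{i<j}|(\Pi_i\cap\Pi_j\cap V_2)(\Fqt)| - (\text{higher terms})$ and compare it to the defect forced by the hypothesis. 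Equality in the point count forces: (a) each pairwise intersection line $\Pi_i\cap\Pi_j$ must be a \emph{secant} line (carrying exactly $q+1$ points of $V_2$ rather than $1$ as for a tangent line or $q+1$-as-a-generator — one needs to rule out the line being a generator, which would be "too many" but would force coincidences reducing the count elsewhere), and (b) all these lines coincide in a single common line $\ell$, since any triple-overlap beyond $\ell$ would further reduce the count. Once all $\binom{d}{2}$ pairwise intersections equal one common secant line $\ell$, the $\Pi_i$ form a sub-pencil of $\B(\ell)$, and the count is exactly $d(q^3+q^2+1)-(d-1)(q^2+1)+0 = $ hmm, I would recompute: $d$ tangent planes sharing a common secant line $\ell$ with $|\ell(\Fqt)\cap V_2| = q+1$ give $d\cdot(q^3+q^2+1) - (d-1)(q+1) = d(q^3+q^2-q)-d + dq+q+1 = $ — I should simply verify this equals $d(q^3+q^2-q)+q+1$, which is exactly what Proposition \ref{attained} asserts, so the bookkeeping is pinned down by that proposition. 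Conversely, Proposition \ref{attained} gives existence of such a configuration attaining equality, completing the "if and only if."

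The main obstacle I anticipate is the equality analysis, specifically showing that the $\binom{d}{2}$ pairwise intersection lines must \emph{all coincide} and must be \emph{secant} (not tangent, and not generators). Ruling out a pairwise intersection being a tangent line is easy since it only subtracts $1$ point, leaving the count too large — so I would need to check that the upper bound computation in the "union of tangent planes" case is genuinely tight only under the secant-and-common-line scenario. For two of the planes this is clear; the subtlety is the combinatorics of $d\ge 3$ planes, where one must show that if not all pairwise intersections coincide, then some triple of planes contributes extra overlap forcing a strict loss — here I would set up the inclusion–exclusion carefully, using that distinct lines through the configuration meet in at most one point and that a point of $V_2$ lies on at most $q+1$ generators, to show the point count is maximized exactly when the arrangement degenerates to a pencil about one secant line. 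Part (c) of Proposition \ref{descbook} (only $q+1$ tangent planes in the book of a secant line) is what makes $d\le q+1$ necessary and is implicitly what guarantees $d$ such planes exist; for $d\le q$ this is comfortably within range, so no boundary issue arises.
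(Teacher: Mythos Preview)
Your reduction to the case ``$V(F)$ is a union of tangent planes'' via Theorem \ref{notan} matches the paper exactly (your arithmetic for the difference of the two bounds is slightly off --- the gap is $q(q+1-d)$ --- but the conclusion is correct).

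The genuine gap is in your treatment of the union-of-tangent-planes case. You propose bounding overlaps in a full inclusion--exclusion, noting that two distinct planes meet in a line carrying ``at most $q^2+1$'' points of $V_2$. But for an upper bound on the union you need a \emph{lower} bound on the pairwise overlaps, not an upper bound. And the lower bound you need is not obvious: if two tangent planes could meet in a tangent line, the overlap would be a single point and your inclusion--exclusion would give $2(q^3+q^2+1)-1$, which already exceeds the conjectured bound for $d=2$. The fact that saves you --- which you never invoke --- is Proposition \ref{descbook}(b): a tangent line lies in exactly one tangent plane, so two \emph{distinct} tangent planes always meet in a generator or a secant, hence their intersection with $V_2$ has at least $q+1$ rational points. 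Your remark that the tangent-line case ``is easy since it only subtracts $1$ point, leaving the count too large'' has the logic backwards; it is precisely this case that would break the bound, and it must be excluded geometrically, not dismissed.

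Once you have that key fact, the paper avoids your full $d$-plane inclusion--exclusion entirely by inducting on $d$: peel off one tangent plane $\Pi_1=V(H_1)$, apply the induction hypothesis to $V(H_2\cdots H_d)$, and subtract at least $|(\Pi_1\cap\Pi_2\cap V_2)(\Fqt)|\ge q+1$. This gives the bound in one line and makes the equality case nearly automatic: equality forces equality at every step of the induction, so every $\Pi_1\cap\Pi_i$ has exactly $q+1$ points on $V_2$ (hence is a secant, not a generator) and coincides with the common line from the inductive hypothesis. Your proposed direct analysis of $\binom{d}{2}$ pairwise intersections and higher-order terms can be made to work, but it is substantially more laborious and, as your own ``hmm, I would recompute'' passages indicate, you have not actually carried it out.
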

\begin{proof}
If $V(F)$ is not a union of planes that are tangent to $V_2$, then, using $d \le q$ and Theorem \ref{notan}, we obtain that $|\X (\Fqt)| < d(q^3+q^2-q)+q+1.$
We may thus assume that $V(F)$ is a union of planes that are tangent to $V_2$.
We prove the result using induction on $d$. For $d=1$, the result is a consequence of Theorem \ref{linear}.
Now assume that $d>1$ and that the result holds for polynomials $G \in \Fqt[x_0, x_1, x_2, x_3]$ where $\deg  G = d-1$ and $V(G)$ is a union of planes
that are tangent to $V_2$.
Write $F=H_1\cdots H_d$. We may further assume that $V(H_1), \dots, V(H_d)$ are distinct.
Since two distinct planes that are tangent to $V_2$ intersect each other at a line which is  either a generator or a secant  of $V_2$,
we have $|V(H_1 \cap H_2)(\Fqt)| \ge q+1.$
Therefore,
\begin{align*}
  &|(V(F) \cap V_2)(\Fqt)|  \\
      &= |(V(H_2\cdots H_d) \cap V_2)(\Fqt)|+|(V(H_1)\cap V_2)(\Fqt)|-|(V(H_2\cdots H_d) \cap V(H_1) \cap V_2)(\Fqt)|\\
 &\le |(V(H_2\cdots H_d) \cap V_2)(\Fqt)|+|(V(H_1)\cap V_2)(\Fqt)|-|(V(H_2) \cap V(H_1) \cap V_2)(\Fqt)|\\
 &\le (d-1)(q^3+q^2-q)+q+1+q^3+q^2+1-(q+1)\\
  &=  d(q^3+q^2-q)+q+1.
\end{align*}
Equality holds throughout if and only if each of the tangent planes intersect in a common secant.
\end{proof}

This shows that S\o rensen's conjecture is valid.
For $d=q+1$, a similar statement is true as we see now.

\begin{corollary}
Suppose that $d=q+1$ and that $V(F) \neq V_2$. Then $|\X (\Fqt)| \le (q+1)(q^3+q^2-q)+q+1$.
Moreover, equality holds if and only if $V(F)$ is the union of $d$ tangent planes of $V_2$ intersecting in a common secant line.
\end{corollary}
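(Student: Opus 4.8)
The strategy is to mimic the proof of the previous corollary, with two modifications. First, for $d=q+1$ one can no longer invoke $d \le q$ to reduce immediately via Theorem \ref{notan}; instead one must check that the bound of Theorem \ref{notan}, namely $dq^3+(d-1)q^2+1$, still lies strictly below the conjectured maximum $(q+1)(q^3+q^2-q)+q+1$. Substituting $d=q+1$ gives $(q+1)q^3+q\cdot q^2+1=q^4+2q^3+1$ for the first quantity and $(q+1)(q^3+q^2-q)+q+1 = q^4+2q^3-q+1$ for the second; the difference is $q \ge 1 > 0$, so the conjectured bound beats the Theorem \ref{notan} bound even in this boundary case. Hence, exactly as before, if $V(F)$ is \emph{not} a union of planes tangent to $V_2$, Theorem \ref{notan} already gives strict inequality and equality is impossible. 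So we reduce to the case where $V(F)=H_1\cdots H_d$ is a product of $d$ linear forms, each defining a plane tangent to $V_2$, which we may take to be distinct (a repeated plane only decreases the count, since $d \le q+1$ keeps us in the range where Proposition \ref{attained} and its surrounding discussion apply).

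\textbf{The inductive step.} With $V(F)=V(H_1)\cup\cdots\cup V(H_{q+1})$ a union of $q+1$ distinct tangent planes, I would run the same inclusion–exclusion as in the proof of the previous corollary, peeling off $V(H_1)$:
\begin{align*}
|(V(F)\cap V_2)(\Fqt)| &= |(V(H_2\cdots H_{q+1})\cap V_2)(\Fqt)| + |(V(H_1)\cap V_2)(\Fqt)| \\
&\quad - |(V(H_2\cdots H_{q+1})\cap V(H_1)\cap V_2)(\Fqt)|.
\end{align*}
The middle term is $q^3+q^2+1$ by Theorem \ref{linear}. The first term: $V(H_2\cdots H_{q+1})$ is a union of $d-1=q$ distinct tangent planes, and since $d-1=q\le q$, S\o rensen's conjecture (the previous corollary, now proved) applies and bounds it by $q(q^3+q^2-q)+q+1$. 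The subtracted term is at least $|(V(H_2)\cap V(H_1)\cap V_2)(\Fqt)| \ge q+1$, because two distinct tangent planes meet in a line which Lemma \ref{line} forces to be a secant or a generator, each carrying at least $q+1$ rational points of $V_2$. Assembling: $|(V(F)\cap V_2)(\Fqt)| \le q(q^3+q^2-q)+q+1 + q^3+q^2+1 - (q+1) = (q+1)(q^3+q^2-q)+q+1$, as claimed.

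\textbf{The equality analysis.} For the ``if and only if'' I would trace through when each of the above inequalities is tight. The ``if'' direction is Proposition \ref{attained}, which produces $d=q+1$ tangent planes through a common secant line attaining the bound. For ``only if'': equality forces (a) $V(F)$ to be a union of tangent planes with all $H_i$ distinct; (b) equality in the previous corollary applied to $V(H_2\cdots H_{q+1})$, so those $q$ planes share a common secant line $\ell$; and (c) the subtracted term to equal exactly $q+1$, which means $V(H_1)$ meets $V(H_2)\cup\cdots\cup V(H_{q+1})\cap V_2$ in precisely $q+1$ rational points — and since $V(H_1)$ already meets $\ell$ (the common secant of the other $q$ planes) in at least one point, and $\ell\subset V(H_j)$ for $j\ge 2$, we need $V(H_1)\cap V(H_2)\cap V_2$ to be exactly $\ell$'s rational points and for $V(H_1)$ to contribute nothing new beyond $\ell$; this pins down that $\ell\subset V(H_1)$ as well and that $\ell$ is a secant. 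The only genuine obstacle I anticipate is making step (c) fully rigorous: one must argue that if $V(H_1)$ did not contain $\ell$, then $V(H_1)$ would meet the other $q$ planes in more than $q+1$ rational points of $V_2$ (each pairwise intersection $V(H_1)\cap V(H_j)$ is a line meeting $V_2$ in $\ge 1$ point, and these $q$ lines cannot all pass through a single point of $V_2$ without forcing $V(H_1)$ to be the common tangent plane, contradicting distinctness) — a short but slightly delicate counting argument that I would phrase carefully using Theorem \ref{linear} and Remark \ref{linetp}.
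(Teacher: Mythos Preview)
Your arithmetic in the very first step is off, and the slip is fatal for the equality analysis. You compute $(q+1)(q^3+q^2-q)+q+1 = q^4+2q^3-q+1$, but in fact
\[
(q+1)(q^3+q^2-q)+q+1 = (q^4+2q^3-q)+(q+1) = q^4+2q^3+1,
\]
which is \emph{exactly} the bound $dq^3+(d-1)q^2+1$ from Theorem~\ref{notan} at $d=q+1$. So Theorem~\ref{notan} does not give strict inequality here; it gives precisely the target bound. (The paper itself points this out in Remark~\ref{rem:two}.) Your upper-bound argument therefore still goes through---Theorem~\ref{notan} covers the non-tangent-plane case with $\le$, and your inclusion--exclusion step, peeling off one plane and invoking the already-proved $d\le q$ corollary for the remaining $q$ planes, correctly handles the union-of-tangent-planes case. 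That part is fine and is exactly how the paper intends the inequality to be read off.

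Where your proposal breaks down is the ``only if'' direction. Your reduction to the union-of-tangent-planes case rested on the claim that Theorem~\ref{notan} is \emph{strictly} below the target, which is false. Worse, the ``only if'' as literally stated cannot be salvaged: Remark~\ref{rem:two} exhibits, for $q>2$, an \emph{irreducible} surface $V(F)$ of degree $q+1$ (namely $F=\alpha(x_0^{q+1}+x_1^{q+1})+x_2^{q+1}+x_3^{q+1}$ with $\alpha\in\Fq\setminus\{0,1\}$) whose intersection with $V_2$ is the full set of $(q+1)^2$ generators $x_0-\zeta_1 x_1=x_2-\zeta_2 x_3=0$, and a direct count shows this attains $(q+1)(q^3+q^2-q)+q+1$. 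So any argument purporting to prove that equality forces $V(F)$ to be a union of tangent planes must fail. The paper states this corollary without proof and immediately qualifies it in Remark~\ref{rem:two}; the honest statement for $d=q+1$ is that the inequality holds, that the configuration of $q+1$ tangent planes through a common secant attains it, but that $V(F)$ achieving equality need not itself be such a union---only $V(F)\cap V_2$ coincides with one.
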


\begin{remark}\label{rem:two}\normalfont
If $d=q+1$ and $V(F) \neq V_2$, the bound in Corollary \ref{cor:bound2} and the conjectured bound in Conjecture \ref{conj:soerensen} are the same.
For $q>2$, an example attaining this bound is given by $F=\alpha(x_0^{q+1}+x_1^{q+1})+x_2^{q+1}+x_3^{q+1}$ where $\alpha \in \Fq \setminus\{0,1\}$.
Note that $V(F)$ is irreducible, hence does not contain a tangent plane, while $V(F) \cap V_2$ consists of the $(q+1)^2$ generators defined by $x_0-\zeta_1 x_1=x_2-\zeta_2x_3=0$,
where $\zeta_1,\zeta_2 \in \{\alpha \in \Fqt \mid \alpha^{q+1}=-1\}$. While it is not true that $V(F)$ is the union of $q+1$ tangent plane
intersecting in a common secant line, showing that the second part of Conjecture \ref{conj:soerensen} is not true in its current form,
it is clear that $V(F) \cap V_2=V(x_0^{q+1}+x_1^{q+1}) \cap V_2$. Hence in this example, $\X$ can still be obtained
as the intersection of $V_2$ and the union of $q+1$ tangent planes intersecting in a common secant line.
Proving this in general as well as understanding what happens for $d >q+1$ would be natural open problems for further study.
\end{remark}


\section*{Acknowledgment}
The first author is supported by The Danish Council for Independent Research (DFF-FNU) with the project \emph{Correcting on a Curve}, Grant No.~8021-00030B.
The second author is supported by a postdoctoral fellowship from DST-RCN grant INT/NOR/RCN/ICT/P-03/2018.
The third author is deeply grateful to DTU for their hospitality during his stay.


\begin{thebibliography}{GV1}
\bibitem{BC}
R.C. Bose,  I.M. Chakravarti,  Hermitian varieties in a finite projective space $PG(N,q^2)$. \emph{Canad. J. Math.} {\bf 18} (1966), 1161--1182.

\bibitem{BD}
P. Beelen, M. Datta, Maximum number of points on intersection of a cubic surface and a non-degenerate Hermitian surface, to appear in Moscow Mathematical Journal, preprint available: arXiv:1802.06681.

\bibitem{C}
 I.M. Chakravarti,  Some properties and applications of Hermitian varieties in a finite projective space $PG(N,q^2)$ in the construction of strongly regular graphs (two-class association schemes) and block designs. \emph{J. Combinatorial Theory Ser. B} {\bf 11} (1971), 268--283.

\bibitem{Ch}
I.M. Chakravarti, The generalized Goppa codes and related discrete designs from Hermitian surfaces in PG($3,s^2$),  \emph{Lecture Notes in Comput. Sci.}  {\bf 311}, Springer, Berlin, 1986,  116--124.

\bibitem{CK} A. Cossidente, G. Korchm{\'a}ros, Transitive ovoids of the Hermitian surface of PG($3,q^2$), with q even. \emph{J. Combin. Theory Ser. A.} {\bf 101} (2003), 117--130.




\bibitem{E}
 F.A.B. Edoukou, Codes defined by forms of degree 2 on Hermitian surfaces and S{\o}rensen's conjecture. \emph{Finite Fields Appl.} 13 (2007), no. 3, 616--627.



\bibitem{GK} L. Giuzzi, G. Korchm{\'a}ros, Ovoids of the Hermitian Surface in Odd Characteristic, \emph{Adv. Geom.} \textbf{suppl.} (2003), S49--S58.

\bibitem{H} R. Hartshorne,  Algebraic geometry. Graduate Texts in Mathematics, 52. Springer-Verlag, New York, 1977.


\bibitem{IZ2}
S. Innamorati, M. Zannetti, F. Zuanni, Note A combinatorial characterization of the Hermitian surface, \emph{Discrete Math.} {\bf 313} (2013), 1496--1499.

\bibitem{L}
G. Lachaud, Number of points of plane sections and linear codes defined on algebraic varieties. (English summary) \emph{Arithmetic, geometry and coding theory (Luminy, 1993)}, 77--104, de Gruyter, Berlin, 1996.





\bibitem{SoT}
A.B. S{\o}rensen, Rational points on hypersurfaces, Reed-Muller codes and algebraic-geometric codes, Ph.D. thesis, Aarhus, Denmark, 1991.

\end{thebibliography}
\end{document}